\def\N{{\mathbb N}}
\def\Z{{\mathbb Z}}
\def\C{{\mathbb C}}
\def\Z{{\mathbb Z}}
\def\f1{\mathbf{1}}
\newcommand{\indic}[1]{\mathbb{1}\left(#1\right)}
\newcommand{\indicil}[1]{\mathbb{1}(#1)}
\newcommand{\Ept}[1]{\mathbb{E}\left[#1\right]}
\newcommand{\Eptil}[1]{\mathbb{E}[#1]}
\newcommand{\Prob}[1]{\mathbb{P}\left\{#1\right\}}
\newcommand{\Probil}[1]{\mathbb{P}\{#1\}}
\newcommand{\twopartdef}[4]{	\left\{		\begin{array}{ll}	#1 & \mbox{if } #2 \\[3mm]	#3 & \mbox{if } #4	 \end{array}	\right.}
\def\endsup{L}
\def\supint{[0,\endsup)}
\def\xnbar{\overline{X}^{(N)}}
\def\xn{X^{(N)}}
\def\xni{X^{(N),i}}
\newcommand{\xnX}[1]{X^{(N),#1}}
\def\vX{\mathbf{X}^{(N)}}
\def\xin{Z^{(N)}}
\def\xinbar{\overline{Z}^{(N)}}
\def\En{E^{(N)}}
\def\nunone{S^{(N)}}
\def\nunonebar{\overline{S}^{(N)}}
\def\ageni{a^{(N),i}}
\def\age{a}
\def\j0{-\xn(0)+1}
\def\ki0{-k_{0,i}^{(N)}}
\newcommand{\hc}[0] {[0,\infty)}
\newcommand{\ho}[0] {(0,\infty)}
\def\cbone{\C_{\leq 1}^1\hc}
\def\pdeset{\mathbb{X}}
\def\waitn{W^{(N)}}
\def\residni{b^{(N),i}}
\def\lambdan{\lambda^{(N)}}
\def\Sn{\mathcal{S}^{(N)}}
\title{Mean-field Dynamics of Load-Balancing Networks with General Service Distributions}
\author[1]{Reza Aghajani\thanks{Reza@brown.edu. The first author  was partially supported by NSF grant
  CMMI-1234100.}}
\author[2]{Xingjie Li\thanks{\texttt{xli47@uncc.edu}}}
\author[1]{Kavita Ramanan\thanks{Kavita\_Ramanan@brown.edu. The third author was partially supported by ARO grant W911NF-12-1-0222 and
NSF grant CMMI-1407504.}}
\affil[1]{Division of Applied Mathematics, Brown University, Providence, RI, USA.}
\affil[2]{Department of Mathematics and Statistics, University of North Carolina Charlotte, Charlotte, NC, USA.}
\begin{document}

\theoremstyle{definition}
\newtheorem{assumption}{Assumption}[part]
\newtheorem*{assumptionNon}{Assumption}
\newtheorem{theorem}{Theorem}[section]
\newtheorem{lemma}[theorem]{Lemma}
\newtheorem{proposition}[theorem]{Proposition}
\newtheorem{corollary}[theorem]{Corollary}
\newtheorem{definition}[theorem]{Definition}
\newtheorem{remark}[theorem]{Remark}
\renewcommand{\theassumption}{\Roman{assumption}}

\date{\today}

\maketitle
\begin{abstract}
We introduce a general framework for the mean-field analysis of large-scale load-balancing networks with general service distributions.  Specifically, we consider a parallel server network that consists of N queues and operates under the $SQ(d)$ load balancing policy, wherein jobs have independent and identical service requirements and each incoming job is routed on arrival to the shortest of $d$ queues that are sampled uniformly at random from $N$ queues.    We introduce a novel state representation and,
for a large class of arrival processes, including renewal and time-inhomogeneous Poisson arrivals, and mild assumptions on the service distribution, show that the mean-field limit, as $N \rightarrow \infty$, of the state can be characterized as the unique solution of a sequence of coupled partial integro-differential equations, which we refer to as the hydrodynamic PDE.    We use a numerical scheme to solve the PDE to obtain approximations to the dynamics of large networks and demonstrate the efficacy of these approximations using Monte Carlo simulations.   We also illustrate how the PDE can be used to gain insight into network performance.
\end{abstract}

\section{Introduction}\label{sec_intro}
Load balancing is an effective method to improve the performance and reliability of networks by optimizing resource use. With growth in the use of server farms and computer cluster, large-scale load balancing networks appear in variety of applications, including  internet services such as high-traffic web sites,  high-bandwidth File Transfer Protocol sites, Network News Transfer Protocol (NNTP) servers, Domain Name System (DNS) servers, and databases, as well as  cloud computing and communications systems.

An extensively studied problem is the design and analysis of load-balancing algorithms that aim to improve network  performance. This is particularly challenging for large-scale networks, where it is not feasible to implement classical algorithms like \textit{join-the-shortest-queue}, which  incur high communication overhead and computational cost.   In this context,
randomized algorithms provide an attractive alternative.   A popular algorithm that achieves a better balance between network performance and communication overhead is  the so-called $SQ(d)$ (or ``supermarket'') algorithm. This algorithm was introduced in the case $d=2$ by Vydenskaya et al.\ in \cite{VveDobKar96} in the simple setting of a network comprising $N$ homogeneous parallel servers, each with its own queue, that process a common stream of jobs that must be routed immediately on arrival.  In the $SQ(d)$ algorithm, upon arrival of a job, $d$ queues are sampled independently and uniformly at random, and the job is routed to the shortest queue amongst those sampled. Although, when $d\geq2$ and $\lambda<1$, the stationary distribution of a typical queue is not computable,  the limiting stationary distribution, as $N \rightarrow \infty$, was explicitly computed in \cite{VveDobKar96} and shown to have a double exponential tail decay, in contrast to the exponential decay when $d=1$ (which corresponds to random routing). This dramatic improvement in performance gained by adding just one extra random choice is known as the ``power of two choices'', and has led to substantial interest in this class of randomized load balancing schemes. The extension to general $d > 2$ was studied by Mitzenmacher \cite{Mit01}, and a static balls-and-bins analog was originally studied in Azar et al.\ \cite{AzaBrKarUpf99}.

The analysis of the $SQ(d)$ model in the case of exponential service times is carried out using the so-called ``ODE method''. This proceeds by first representing the  dynamics of the $N$-server network by a Markov process $S^{(N)} =(S_\ell^{(N)}; \ell \ge1)$, where $S_\ell^{(N)}(t)$ represents the number of queues that have $\ell$ or more jobs at time $t$, and then showing that, as $N\to\infty$, the sequence of suitably scaled Markov processes converges weakly (on finite time intervals) to the unique solution of a countable system of coupled $[0, 1]$-valued ordinary differential equations (ODEs). This limit result is obtained by a simple application of Kurtz's theorem (see Theorem 11.2.1 in \cite{KurtzBook}), generalized to countable state spaces. Insight into the equilibrium behavior is obtained by first showing that, under the stability condition $\lambda< 1$, each
Markov process $S^{(N)}$ is ergodic, then characterizing the limit of the sequence of scaled stationary distributions as the unique equilibrium point of this system of ODEs \cite{VveDobKar96}, and finally explicitly identifying this equilibrium point.

However, in most real-world applications, service times are typically not exponentially distributed.  For example, statistical analyses suggest that service times follow a Log-Normal distribution in (medium-scale) call centers \cite{BroEtAl05}, a Gamma distribution in Automatic Teller Machines \cite{Kol84}, and studies of content download in Amazon S3 suggest that download times follow a shifted exponential distribution \cite{CheEtAl14,GuaKoz14}. Moreover, Phase-type distributions are used to approximate more complicated service time distributions \cite{DaiDieGao14,See86}. In the case of general service times, in order to describe the evolution of the system it is not sufficient to keep track of the fraction of queues with $\ell$ jobs at any time. For each job in service, one has also to keep track of its age (the amount of time the job has spent in service) or its residual service time. In the system with $N$ servers, this requires keeping track of $N$ additional random variables, and thus the dimension of the Markovian state representation grows with $N$, which is not convenient for obtaining a limit theorem.

\subsection{Prior Work}

The supermarket model and its various modifications have been extensively studied for the case of exponential service distributions. The path-space evolution of the supermarket model was studied by Graham \cite{Gra00}, the maximum equilibrium queue length was analyzed in \cite{LucMcD06}, and strong approximations were obtained in \cite{LucNor05}. The basic model has also been generalized to incorporate features of relevance in applications such as load migration, load stealing and thresholds (see, e.g., \cite{FarMoaPra05}, \cite{Mit00} and \cite{Kat14}).

Although results on the exponential service time model were obtained almost two decades ago, to the best of our knowledge, there appears to be no prior work that characterizes the transient behavior of the supermarket model with general service time distributions, and until recently, there was almost no work on the equilibrium distribution under the sub-criticality condition $\lambda < 1$. Recent progress on the equilibrium behavior was made in a nice series of papers by Bramson et al.\ \cite{BraLuPra10,BraLuPra12,BraLuPra13}, using the so-called ``cavity method''. In particular, in \cite{BraLuPra13}, in was shown that for the sub-class of power law distributions with exponent $-\beta$ with $\beta>1$, the limiting stationary distribution has a doubly exponential tail if $\beta > d/(d-1)$, an exponential tail if $\beta = d/(d -1)$ and a power law tail if $\beta < d/(d -1)$.  However, it should be noted that the tails of the limiting stationary distribution provide only limited information about the stationary distribution of the $N$-server system because limits do not interchange, that is, the tail of the limiting stationary distribution is not the limit of the tails of the $N$-server stationary distributions.

Moreover, the results in \cite{BraLuPra13} assume that the cumulative arrivals are Poisson and the service distribution has a decreasing hazard rate function.   According to the authors \cite[Page 3]{BraLuPra13}, extending their results beyond these assumptions using their framework appears to be a difficult problem.

\subsection{Our Contributions}

In this paper, we introduce a new framework for the study of load balancing networks with general service time distributions. The work combines stochastic modeling (choosing a suitable state representation), with techniques from probability (convergence results), PDEs (analysis of the limit), numerical analysis (stable schemes for solving the PDE) and simulations (validation) to provide engineering insights into the performance of the network under the SQ(2) load balancing algorithm in the presence of general service times. Our specific contributions include the following:

\begin{enumerate}
  \item  Development of the PDE method for analyzing load-balancing networks with general service distributions, which can be viewed as a generalization of the well known ODE method \cite{VveDobKar96,Mit01} used to study large-scale networks with exponential service distributions.  Our method applies to the large class of service distributions that have a density and finite mean, including Pareto, Log-Normal, Gamma and Phase type distributions. In particular, it does not require the decreasing hazard rate function assumption imposed in \cite{BraLuPra13}.

  \item The PDE can be used to approximate not only the queue length distribution but also other quality of service (QoS) parameters such as the virtual waiting time.

  \item In contrast with previous work, our framework also enables the study of transient behavior  of load-balancing networks with general service distributions,   in time-homogeneous networks as well as networks with time-inhomogeneous (Poisson) arrivals, both of which seem relevant for real-world applications.

  \item The stable numerical scheme that we use to  approximate the PDE provides a computationally efficient alternative to Monte Carlo simulations that could be useful for studying the performance and design of large networks, as illustrated in Sections   \ref{sec_simulation} and \ref{sec_insight}.

  \item
  Although, it is known that networks with heavy-tailed service distributions have worse steady-state performances \cite{BraLuPra13}, using our method, we identified the somewhat surprising phenomenon that they can lead to better performance with respect to some transient QoS parameters, as detailed in Section \ref{sec_insight}.

\end{enumerate}

The rest of the paper is organized as follows. The structure and representation of the load-balancing network are described in Section \ref{sec_model}. The hydrodynamic PDE is presented in Section \ref{sec_PDE}. Section \ref{sec_main} contains the main results of the paper, and their proofs are given in Section \ref{sec_proof}. In Section \ref{sec_simulation}, we present the numerical scheme to solve the PDE and validate the main results using Monte Carlo simulations. Engineering insights gained by the PDE are illustrated in Section \ref{sec_insight}. Finally, conclusions and future works are discussed in Section \ref{sec_discussion}.

\section{A Load Balancing Network}\label{sec_model}

\subsection{Model Description}\label{sec_desc}
Consider a network of $N$ homogeneous  parallel servers, each with its own infinite capacity queue, that processes a common stream of arriving jobs (see Figure ~\ref{Figure:network}) that are routed immediately on arrival according to a load-balancing algorithm. Each server follows a first-in-first-out (FIFO) service policy,  and is non-idling in the sense that it cannot be idle if there is a job waiting in the queue. Hence, if a job is routed to an idle server, it  immediately starts receiving service, and otherwise, it is placed at the end of the queue. We index servers by $i=1,...,N$.

Let $\En$ be the cumulative arrival process, that is, for every $t\geq0$, $\En(t)$ is the number of job arrivals during the interval $[0,t]$. Jobs are indexed by $j\in\Z$, and each job $j$ has a service time requirement of $v_j$. The sequence of service times $\{v_j\}_{j\in\Z}$ is assumed to be i.i.d., independent of the arrival process, and distributed according to a general distribution function $G$. Let $\overline{G}$ represent the complementary service distribution, i.e., $\overline{G} (x) = 1 - G(x)$.

For each server $i$ and at each time $t\geq0$, the number of jobs in server $i$ (including the one receiving service and the ones waiting in the queue) is called the queue length of server $i$ at time $t$, and is denoted by $\xni(t)$. The total number of jobs in system is  denoted by $\xn(t)$, whence $\xn(t)\doteq\sum_{i=1}^N\xni(t)$.
\begin{figure}
\centering
\includegraphics[height=3.5 cm]{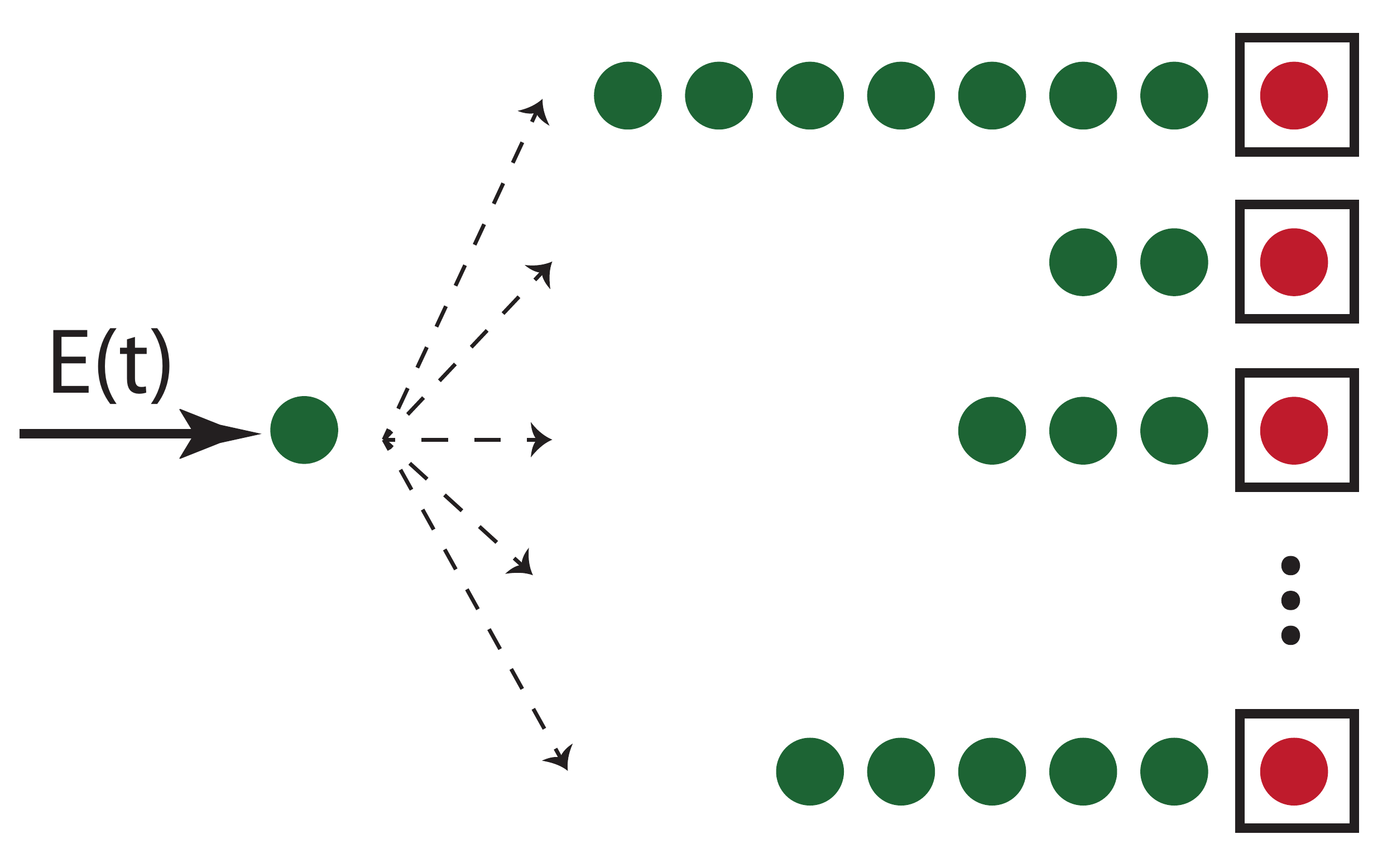}
\caption{The load-balancing network.}
\label{Figure:network}
\end{figure}

We study the performance of the network described above under a randomized load balancing algorithm that we refer to as the $SQ(d)$  algorithm. Based on this algorithm, upon arrival of a job, $d$ queues are sampled independently and uniformly at random, and the job is then routed to the queue with minimum length (with ties broken uniformly at random). In this paper, we present our results for the case $d=2$; however, the extension for
$d \geq 2$ is almost immediate.

\subsection{Assumptions}\label{sec_asm}
The following assumptions are imposed on the cumulative arrival process and service time distribution.

\begin{assumption}\label{asm_arrival}(Arrival Process)
For every $N\in\N,$ $\En$ is  a time-inhomogeneous Poisson process with rate $\lambdan(\cdot)\doteq a^{(N)}\lambda(\cdot)$, where $\lambda(\cdot)$ is a non-negative, locally square integrable function (that is, $\int_0^T\lambda(t)dt<\infty$ for every $T<\infty$) and $\{a^{(N)}\}_{N\in\N}$  is a sequence of positive real numbers satisfying $a^{(N)}/N\to 1$ as $N\to\infty$.
\end{assumption}

\begin{remark}\label{gen_asm_arrival}
Assumption \ref{asm_arrival} is satisfied by most interesting cases including the heavy traffic limit $\lambda^N(\cdot)=N\lambda(\cdot)-\beta\sqrt{N}.$ However, as shown in forthcoming work, the results of this paper hold for more general sequences of arrival rates.
\end{remark}

Recall that when a distribution $G$ has a density $g$, the hazard rate function $h$ associated with that distribution is defined by
\begin{equation}\label{def_h}
  h(x)\doteq\frac{g(x)}{1-G(x)},\quad\quad x\in\supint,
\end{equation}
where $\endsup\doteq\sup\{x\in[0,\infty):G(x)<1\}.$

\begin{assumption}\label{asm_service}(Service Distribution)
   The distribution function $G$ has a finite mean equal to $1$, and density $g$. Also, the hazard rate $h$ is uniformly bounded by a constant $H$, that is,
\[\sup_{x\in\supint}h(x)\leq H.\]
\end{assumption}
\begin{remark}
Assumption \ref{asm_service} is satisfied by a wide class of service time distributions, including any Pareto distribution with finite mean, Phase-Type distributions, Log-Normal distributions and Gamma distributions. Although the boundedness assumption on $h$ can be relaxed, it makes the representation of the results in Sections \ref{sec_main} considerably easier. Note that since for every $a,b \in \supint$,
\[\int_a^bh(x)dx= \ln(\overline G(a))-\ln(\overline G(b)),\]
$h$ is always locally integrable on $\supint$, but not integrable. Therefore, Assumption \ref{asm_service} implies that  $L=\infty$, that is, the distribution $G$ is supported on the whole $[0,\infty)$.
\end{remark}

\subsection{An Important State Descriptor}\label{sec_xi}

For every $\ell\ge 1$ and time $t\geq0$, let $\Sn_\ell(t)$ denote the subset of all servers with queue length at least $\ell$ at time $t$, that is,
\begin{equation}\label{def_setS}
  \Sn_\ell(t)\doteq\left\{i=1,...,N : \xni(t)\geq \ell \right\},
\end{equation}
and denote by $\nunone_\ell$ the number of such servers,  that is,
\begin{equation}\label{def_sl}
  \nunone_\ell(t)\doteq \# \left\{i=1,...,N : \xni(t)\geq \ell \right\}=\# \Sn_\ell(t).
\end{equation}

Note that $\Sn_1(t)$ is the set of servers with at least one job, to which we refer as ``busy servers''. As each server uses a FIFO policy and is non-idling, there is a unique job being served at each busy server. Hence, at each time $t$, we can assign to every busy server $i$ an \textit{age}, denoted by $\ageni(t)$, which is defined to be the amount of time that the current job has been receiving service.

As discussed in the introduction, if the service times are exponentially distributed, the process $\{(\nunone_\ell(t);\ell\ge1);t\geq0\}$ is a  convenient state descriptor for the $N$-server network. But, since other service distributions do not have the memoryless property (exponential is unique), the dynamics of the network depends on  the ages of all jobs in service. Hence, in the case of general serviced distributions, the following state descriptor turns out to be convenient. For every $N\in\N$, $\ell\geq 1$ and $t\geq0$, define the functions $\xin_\ell(t,\cdot)$ as
\begin{equation}\label{def_xin}
  \xin_\ell(t,r)\doteq\sum_{i\in\Sn_\ell} \frac{\overline G(\ageni(t)+r)}{\overline G(\ageni(t))},\quad\quad\quad r\geq0,
\end{equation}
where note that the summation is over servers at queues with length at least $\ell$. For an intuitive understanding of this quantity, note that for any $N\in\N,$ the conditional probability that a job which is being served at time $t$ in a busy server $i$ will still be in system at time $t+r$ for $r\geq0$, is ${\overline G(\ageni(t)+r)}/{\overline G(\ageni(t))}$. Therefore, $\xin_\ell(t,r)$ is the expected number of jobs that are being served at time $t$ in a server with queue length at least $\ell$  and that will still be in the system at time $t+r$.   Note in particular that $\xin_\ell(t,0)$ is equal to the cardinality of $\Sn_\ell(t)$, that is
\begin{equation}\label{ZSrelation}
  \xin_\ell(t,0)=\nunone_\ell(t).
\end{equation}

\section{A PDE Approximation}\label{sec_PDE}

In this section, we introduce a set of partial integro-differential equations that we call the \text{hydrodynamic PDE}. As shown in Section \ref{sec_main},  these coupled equations uniquely characterize the limit, as $N \rightarrow \infty$, of the scaled state descriptors $\xinbar_\ell$ defined as
\[\xinbar_\ell(t,r)\doteq\frac{1}{N}\xin_\ell(t,r).\]
The equations are described in Section \ref{sec_PDEdesc}. Then, in Section \ref{sec_ODE}, we show how these set of PDEs reduce to the set of ODEs obtained in \cite{VveDobKar96} when the service distribution is exponential. Finally, we provide an intuitive interpretation of the hydrodynamic PDE in Section \ref{sec_PDEinterpret}.

\subsection{Description of the Hydrodynamic PDE}\label{sec_PDEdesc}

Define $\cbone$ to be the space of continuously differentiable functions $f$ with bounded derivative, such that $|f|$ is bounded by $1$. Also, define $\pdeset$  to be the set of functions $\varphi$ on $\hc\times\hc$ such that for every $t\geq0$, $\varphi(t,\cdot)\in\cbone$ with $\partial_r \varphi(t,\cdot)$ denoting the derivative with respect to the second variable, and for every $r\geq0$, the mapping $t\mapsto \varphi(t,r)$ is  measurable.  Now we define the  PDE.

\begin{definition}\label{def_pde}
For every function $\lambda$ and set of functions $Z^0_\ell\in\cbone$, $\ell\ge1$, a set of functions $(Z_\ell;\ell\ge1)$ with $Z_\ell\in\pdeset$ for all $\ell\geq 1$ is said to solve the hydrodynamic PDE associated to $(\lambda, Z^0_\ell;\ell\ge1)$ if for every $t,r\geq0$, $Z_1$ satisfies
\begin{align}\label{pde_eq1}
  Z_1(t,r) = Z_1^0(t+r)-\int_0^t \overline G(t+r-u)\partial_rZ_2(u,0)du  + \int_0^t \lambda(u)\overline G(t+r-u)(1-Z_1(u,0)^2)du,
\end{align}
with the boundary condition
\begin{align}\label{pde_boundary1}
Z_1(t,0) =Z_1^0(0)+\int_0^t \big(\partial_rZ_1(u,0)-\partial_rZ_2(u,0)\big)du+ \int_0^t \lambda(u)\left( 1-Z_1(u,0)^2 \right)du,
\end{align}
and for $\ell\geq 2$, $Z_\ell$ satisfies
\begin{align}\label{pde_eq}
  Z_\ell(t,r)  & = Z^0_\ell(t+r)-  \int_0^t \overline G(t+r-u)\partial_rZ_{\ell+1}(u,0)du\\
   &\quad +  \int_0^t \lambda(u)(Z_{\ell-1}(u,0)+Z_\ell(u,0))(Z_{\ell-1}(u,t+r-u)-Z_\ell(u,t+r-u))du,\notag
\end{align}
with the boundary conditions
\begin{align}\label{pde_boundaryell}
Z_\ell(t,0)  =Z_\ell^0(0)+\int_0^t \big(\partial_rZ_\ell(u,0)-\partial_rZ_{\ell+1}(u,0)\big)du  + \int_0^t \lambda(u)\left( Z_{\ell-1}(u,0)^2-Z_\ell(u,0)^2 \right)du.
\end{align}
\end{definition}
Note that by equations \eqref{pde_eq1} and \eqref{pde_eq}, $Z_\ell^0$ is the initial condition for $Z_\ell$, that is $Z_\ell(0,r)=Z_\ell^0(r),$ for all $\ell\geq1$ and $r\geq0.$
\begin{remark}
  Although the equations \eqref{pde_eq1}-\eqref{pde_boundaryell} are partial integro-differential equations and not partial differential equations in the classic sense, we still refer to them as hydrodynamic PDE for conciseness.
\end{remark}

\subsection{Reduction to ODE in the Exponential Case}\label{sec_ODE}
To better illustrate the hydrodynamic PDE, we show that  when the service distribution is exponential and the arrival rate $\lambda (\cdot) \equiv \lambda$ is constant, it reduces to the set of ordinary differential equations (ODEs) obtained in \cite{VveDobKar96,Mit01}. Substituting the complimentary CDF $\overline G(x)=e^{-x}$ for the exponential distribution in definition \eqref{def_xin} of $\xin_\ell$, we have
\begin{equation*}
  \xin_\ell(t,r)=\sum_{i\in\Sn_\ell} \frac{e^{-\ageni(t)-r}}{e^{-\ageni(t)}}=e^{-r}\nunone_\ell(t).
\end{equation*}
As made rigorous in Theorem \ref{thm_conv}, this suggests the limit satisfies $Z_\ell(t,r)=e^{-r}S_\ell(t)$, with $S_\ell$ equal to the limit of $\nunone_\ell/N$. Substituting $\overline G(r)=e^{-r}$, $Z_\ell$ and its derivative $\partial_r Z_\ell(t,r)=-e^{-r}S_\ell(t)$ in \eqref{pde_eq}, for every $\ell\geq2$ we obtain
\begin{align}\label{temp_ODE}
   S_\ell(t)  =e^{-t}S_\ell(0)+\int_0^t e^{-(t-u)}S_{\ell+1}(u)du  +\lambda\int_0^te^{-(t-u)}(S_{\ell-1}(u)^2-S_\ell(u)^2)du.
\end{align}
Taking derivatives with respect to $t$ of both sides of the equation above yields
\begin{align*}
\frac{d}{dt}S_\ell(t)&  = -e^{-t} S_\ell(0) + S_{\ell+1}(t) - \int_0^t e^{-(t-u)} S_{\ell+1}(u) du   -\lambda \left(S_{\ell-1}(t)^2-S_\ell(t)^2\right) \\
               &\quad + \lambda\int_0^t  e^{-(t-u)} \left(S_{\ell-1}(t)^2-S_\ell(u)^2\right) du.
\end{align*}
Using \eqref{temp_ODE} to eliminate the third and fifth terms on the right-hand side of the last equation , we obtain
\begin{equation}\label{odell}
  \frac{d}{dt}S_\ell(t)=-(S_\ell(t)-S_{\ell+1}(t))+\lambda(S_{\ell-1}(t)^2-S_\ell(t)^2).
\end{equation}
Exactly analogous calculations can be shown to see that equations \eqref{pde_eq1}
\begin{equation}\label{ode1}
  \frac{d}{dt}S_1(t)=-(S_1(t)-S_2(t))+\lambda(1-S_1(t)^2).
\end{equation}
Similarly, substituting $\overline G$ and $Z_\ell$  in \eqref{pde_eq},  we obtain
and, again by taking derivative, we have
Moreover, substituting $Z_\ell(t,0)=S_\ell(t)$ and $\partial_r Z_\ell(t,r)=-S_\ell(t)$ in \eqref{pde_boundary1} and \eqref{pde_boundaryell}, we again obtain \eqref{ode1} and \eqref{odell}, respectively. Note that the ODEs \eqref{ode1}-\eqref{odell} coincide with the equations that were previously obtained, e.g., (1) in \cite{Mit01}.

\subsection{Interpretation of the Hydrodynamic PDE}\label{sec_PDEinterpret}

Existence and uniqueness of solutions to the PDE are established in Theorems \ref{thm_conv} and \ref{thm_uniqueness}.  Here, we first provide a heuristic explanation of each term in the hydrodynamic PDE \eqref{pde_eq1}-\eqref{pde_boundaryell}.  Interpreting $Z_\ell$ as the limit of the sequence $\{\xin_\ell/N\}$  defined in \eqref{def_xin}, note that $Z_\ell (t,r)$ represents the (limit of the) fraction of jobs that were in service at time $t$ at a queue of length at least $\ell$ and that were still in service at time $t+r$.  There are three sources that contribute to $Z_\ell (t,r)$, corresponding to the three terms on the right-hand side of \eqref{pde_eq}.  The first is the expected fraction of jobs that were already in service at time $0$ at a queue of length at least $\ell$ and that will still be in service at time $t+r$, which is given by the first term on the right-hand side  of \eqref{pde_eq}, namely $Z_\ell(0, t+r) = Z_\ell^0(t+r)$.

The second source accounts for any job that entered service at some time $u \in [0,t]$ due to completion of the job ahead of it in the queue,  and that would still not have completed service at time $t+r$ (equivalently, jobs that have a service time greater than  $t+r-u$).  For such a job to be in a queue of length $\ell$ or more at time $t$,  it must have been a queue of length $\ell+1$ just prior to the departure of the job ahead of it.   Now, to estimate the expected rate of entry into service of such jobs, consider a server that is busy serving a job $j$ with age $a(u)$  at time $u$.  Given this information, the probability that the job will depart within the next $\epsilon$ units of time is equal to
\begin{align*}
\Prob{v_j<\age(u)+\epsilon|v_j>\age(u)}    = \frac{G(\age(u)+\epsilon) -G(\age(u))}{1-G(\age(u))} \approx h(\age(u))\epsilon.
\end{align*}
Therefore, the expected departure rate of jobs from servers with queues of length $\ell+1$ or greater (conditional on their ages)  is roughly
\[  \sum_{i \in \Sn_{\ell+1}(u)} h (\ageni(u)),
\]
which can be rewritten as
\[ \sum_{i\in\Sn_{\ell+1}(u)}\frac{g(\ageni(u))}{1-G(\ageni(u))}  = - \;\partial_r \xin_{\ell+1}(u,0).  \]
Thus, $-\partial_rZ_\ell(u,0)$ is  (the limit of) the expected departure rate at time $u$ from servers with a queue of length at least $\ell+1$, which is also the expected entry rate into service into a queue of length $\ell$.   Multiplying this by $\overline{G}(t+r-u)$, the fraction of such jobs that will still be in service at time $t+r$, and integrating over all possible values of $u \in [0,t]$, we obtain the second term on the right-hand side of \eqref{pde_eq}.

The last contribution is due to the routing of new jobs at some time $u \in [0,t]$ to a queue of length $\ell-1$ such that the job in service at that queue at time $u$  is still in service at time $t+r$.   Note that such a queue has length $\ell$ or more at time $t$ and the job in service at that queue in time $u$ is in service both at times $t$ and $t+r$, and hence contributes to $Z_\ell (t,t+r)$.  To compute the number of such jobs, first note that the total arrival rate of jobs at time $u$ is $\lambda (u)$, and the expected fraction of such jobs that get routed to a queue of length $\ell-1$ under the $SQ(2)$ routing algorithm is  computed in Section \ref{sec_routing} and is equal to  $\nunonebar_{\ell-1}(u)^2 - \nunonebar_{\ell}(u)^2$, which by \eqref{ZSrelation} is also equal to $\xinbar_{\ell-1}(u,0)^2 - \xinbar_{\ell}(u,0)^2$ (here,  we use the convention $\nunone_0\equiv 1$.)   Now, the total number of jobs in service at a queue of exactly length $\ell-1$ just prior to time $u$ is $\xinbar_{\ell-1}(u-,0) -\xinbar_{\ell}(u-,0)$ and the number of these that will still be in service at time $t+r$ is  $\xinbar_{\ell-1}(u-,t+r-u) - \xinbar_{\ell}(u-,t+r-u)$, which implies the fraction of such jobs still in service at time $t+r$ is given by the ratio.  Multiplying the ratio by the  previously computed arrival rate of jobs into queues of length $\ell-1$ at time $u$ and integrating over $u \in [0,t]$, we obtain the third term.

The boundary conditions \eqref{pde_boundary1} and \eqref{pde_boundaryell} are basically mass balance equations. Recall that $\xin_\ell(t,0)$ is the number of jobs receiving service at a queue of length at least $\ell$ at time $t$, and similarly, $\xin_\ell(0,0)$ is the number of jobs that were receiving  service at time $0$ at such a queue. Note that $\nunone_\ell=\xin_\ell(\cdot,0)$ decreases only due to departures from servers with queue length equal to $\ell$ and increases only due to arrivals routed to servers with queue length equal to $\ell-1$. As discussed above, $-\partial_r\xin_{\ell+1}(u,r)$ is the departures rate at time $u$ of jobs from a queue of length $\ell+1$ or more at the time of departure, and hence the second term on the right-hand side of \eqref{pde_boundaryell} represents the limit of the cumulative number of such departures in the interval $[0,t]$ from queues of length exactly $\ell$. Finally, the  third term on the right-hand side of \eqref{pde_boundaryell} represents the total number of arrivals to servers at queues of length exactly $\ell-1$ in the interval $[0,t]$, whose form can be deduced from the routing probabilities computed above.

\section{Main Results}\label{sec_main}

In this section we state the main results of the paper; the proofs are deferred to Section \ref{sec_proof}. First in Section \ref{sec_mainUnique}, we show that the  PDE  \eqref{pde_eq1}-\eqref{pde_boundaryell} have at most one solution in a suitable space. Then in Section \ref{sec_mainConv}, we show that under suitable assumptions on the sequence of initial distributions, the scaled sequence $\{\xinbar_\ell\}_{N\in\N}$ has a limit which satisfies the  PDE (and hence, is the unique solution). Using this convergence result, we show in Theorem \ref{thm_ql} and Theorem \ref{thm_wt} that the queue length distribution of a typical queue and the mean virtual waiting time in the $N$-server network converges to certain functionals of  the solution to the hydrodynamic PDE. Moreover, we establish a ``propagation of chaos'' result, showing that, at any finite time, the queue length distributions of any finite set of queues are asymptotically independent.

\subsection{Uniqueness of the Solution to the PDE}\label{sec_mainUnique}
We now state our first result. We denote by $\mathbb{L}^1_{\text{loc}}\ho$ the space of locally integrable functions (that is, functions $f$ on $[0,\infty)$ that satisfy $\int_0^T f(t)dt<\infty$ for every $T<\infty$).
\begin{theorem}\label{thm_uniqueness}
  Suppose Assumptions \ref{asm_arrival}-\ref{asm_service} hold. Then, for every non-negative function $\lambda\in\mathbb{L}^1_{\text{loc}}\ho$ and $Z^0_\ell\in\cbone, \ell\geq 1 $, then the hydrodynamic PDE  \eqref{pde_eq1}-\eqref{pde_boundaryell} has at most one solution.
\end{theorem}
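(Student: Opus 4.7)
I would take two solutions $(Z_\ell)_{\ell\ge 1}$ and $(\tilde Z_\ell)_{\ell\ge 1}$ of the hydrodynamic PDE with the same data $(\lambda,(Z^0_\ell)_{\ell\ge 1})$, set $\Delta_\ell:=Z_\ell-\tilde Z_\ell$, and write $\delta F_\ell(t):=-\partial_r\Delta_\ell(t,0)$; since $Z_\ell,\tilde Z_\ell\in\pdeset$, one has $|Z_\ell|,|\tilde Z_\ell|\le 1$ automatically.  My first step would be to establish an a priori bound, uniform in $\ell$, on the interior $r$-derivatives: differentiating \eqref{pde_eq} in $r$ and using $g\le H$ (since $g=h\overline G$ with $h\le H$ by Assumption \ref{asm_service} and $\overline G\le 1$) together with $\lambda\in\mathbb L^1_{\mathrm{loc}}\ho$ from Assumption \ref{asm_arrival}, a Gronwall argument produces a constant $K_T$ such that $\sup_\ell\sup_{s\le T,\,r\ge 0}|\partial_r Z_\ell(s,r)|\le K_T$, and similarly for $\tilde Z_\ell$.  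This uniform control on $|\partial_r Z_\ell|$ is the key ingredient that will close the Gronwall loop in the next step.

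Next, fix $T>0$ and introduce
\[
M(t):=\sup_\ell\sup_{s\le t,\,r\ge 0}|\Delta_\ell(s,r)|,\qquad \tilde M(t):=\sup_\ell\sup_{s\le t,\,r\ge 0}|\partial_r\Delta_\ell(s,r)|,\qquad N(t):=\sup_\ell\sup_{u\le t}|\delta F_\ell(u)|.
\]
Subtracting \eqref{pde_eq} for $\tilde Z_\ell$ from the one for $Z_\ell$, using $\overline G\le 1$ and the elementary bilinear Lipschitz bound
\[
\bigl|(a+b)(c-d)-(\tilde a+\tilde b)(\tilde c-\tilde d)\bigr|\le 2\bigl(|a-\tilde a|+|b-\tilde b|+|c-\tilde c|+|d-\tilde d|\bigr)
\]
valid for numbers bounded by $1$, I would obtain
\[
M(t)\le\int_0^t N(u)\,du+C_1\int_0^t\lambda(u)M(u)\,du,
\]
with the $\ell=1$ case handled via \eqref{pde_eq1}.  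Differentiating \eqref{pde_eq} in $r$ and subtracting, using $g\le H$ and the bound $|\partial_r Z_\ell|,|\partial_r\tilde Z_\ell|\le K_T$ from the first step, gives analogously
\[
\tilde M(t)\le H\int_0^t N(u)\,du+C_2\int_0^t\lambda(u)\bigl(M(u)+\tilde M(u)\bigr)\,du,
\]
and evaluating the same differentiated identity at $r=0$ yields
\[
N(t)\le H\int_0^t N(u)\,du+C_3\int_0^t\lambda(u)\bigl(M(u)+\tilde M(u)\bigr)\,du.
\]

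To finish, I would apply Gronwall's inequality to the last display (with constant kernel $H$) to obtain $N(t)\le C_T\int_0^t\lambda(u)(M(u)+\tilde M(u))\,du$, then substitute this bound into the first two displays, exchange the order of integration, and sum to arrive at the closed inequality $M(t)+\tilde M(t)\le C'_T\int_0^t(1+\lambda(u))(M(u)+\tilde M(u))\,du$ on $[0,T]$.  Because $1+\lambda\in\mathbb L^1_{\mathrm{loc}}\ho$ by Assumption \ref{asm_arrival}, a second application of Gronwall forces $M+\tilde M\equiv 0$ on $[0,T]$, so $Z_\ell\equiv\tilde Z_\ell$ for every $\ell$; since $T$ was arbitrary, uniqueness follows.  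The main obstacle throughout is the presence of the boundary derivative $\partial_r Z_{\ell+1}(\cdot,0)$ in \eqref{pde_eq}, which cannot be controlled by the sup norm of $\Delta_\ell$ alone; overcoming it requires the coupled Gronwall on the triple $(M,\tilde M,N)$, whose feasibility rests on the uniform a priori estimate on interior derivatives that in turn relies on the uniform hazard-rate bound in Assumption \ref{asm_service}.
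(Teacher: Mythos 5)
Your overall strategy---form the differences $\Delta_\ell$, derive a Gronwall inequality, and conclude $\Delta_\ell\equiv 0$---has the right shape, and your bilinear Lipschitz estimate is correct. But there is a genuine gap in Step 1, and it is precisely the step the paper's argument is designed to avoid. Your coupled Gronwall on $(M,\tilde M,N)$ requires the uniform a priori bound $\sup_\ell\sup_{s\le T,\,r\ge0}|\partial_r Z_\ell(s,r)|\le K_T$ (and the same for $\tilde Z$), both to make $\tilde M$ and $N$ finite in the first place and to Lipschitz-ify the $r$-differentiated bilinear term, whose telescoping decomposition contains the factor $(\partial_r\tilde Z_{\ell-1}-\partial_r\tilde Z_\ell)$. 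This bound is not available under the hypotheses of Theorem~\ref{thm_uniqueness}. Differentiating \eqref{pde_eq} in $r$ gives, for $P_\ell(t):=\sup_{s\le t,\,r\ge0}|\partial_r Z_\ell(s,r)|$, an inequality of the form
\[
P_\ell(t)\le \|(Z^0_\ell)'\|_\infty + H\int_0^t P_{\ell+1}(u)\,du + C\int_0^t\lambda(u)\bigl(P_{\ell-1}(u)+P_\ell(u)\bigr)\,du,
\]
which does not close by Gronwall because of $P_{\ell+1}$, and whose inhomogeneity $\|(Z^0_\ell)'\|_\infty$---finite for each $\ell$ by $Z^0_\ell\in\cbone$---is subject to no uniform (or even growth) control over $\ell$ in the theorem. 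A weighted-sum Gronwall would at best control $\sum_\ell\epsilon^\ell P_\ell$ under an extra summability hypothesis on $\|(Z^0_\ell)'\|_\infty$, which is not assumed, and even then would yield a sum bound, not the pointwise $\sup_\ell$ bound $K_T$ that your Step 2 invokes. So Step 1 cannot be carried out, and $\tilde M$, $N$ are not a priori well-defined.

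The paper avoids this entirely by never controlling $\partial_r Z_\ell$ pointwise. It works only with the cumulative boundary quantity $D_\ell(t):=-\int_0^t\partial_r Z_\ell(s,0)\,ds$, which is finite for any solution because the boundary conditions \eqref{pde_boundary1}--\eqref{pde_boundaryell} already involve this integral. Integrating by parts in the $\overline G$-convolution and combining the interior equation at $r=0$ with the boundary condition produces a renewal equation $\Delta D_\ell = g*\Delta D_\ell + F_\ell$, whose forcing $F_\ell$ is built from $\Delta\Lambda_\ell$, $\Delta R_\ell$ and $g*\Delta Z_\ell(\cdot,0)$ and hence is controlled by the sup norms $V_{\ell-1},V_\ell$ of $\Delta Z$ alone. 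The renewal theorem (local boundedness of the renewal density $u_G$, Proposition V.2.7 of Asmussen) then inverts this to give $\|\Delta D_\ell\|_t\le C_T\int_0^t(1+\lambda(s))(V_{\ell-1}(s)+V_\ell(s))\,ds$, and substituting back into the interior equation yields the closed system $V_\ell(t)\le 8C_T\int_0^t(1+\lambda(s))(V_{\ell-1}+V_\ell+V_{\ell+1})(s)\,ds$, which a $2^{-\ell}$-weighted sum collapses into a single Gronwall. That renewal structure is the key device missing from your attempt; without it, you would at minimum have to strengthen the hypotheses to demand uniform control of $\|(Z^0_\ell)'\|_\infty$, and replace your $\sup_\ell$ norms by weighted sums, which would still not supply the pointwise coefficient bound needed in your bilinear estimate.
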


\subsection{Convergence of the State Descriptor ${\xin}$}\label{sec_mainConv}

Theorem \ref{thm_conv} below shows that the solution to the hydrodynamic PDE characterizes the limit of $\xinbar$ as $N$ gets large. To state the theorem, we need to impose an additional assumption on the initial conditions. 
\renewcommand{\theenumi}{\alph{enumi}}
\begin{assumption}\label{asm_initial}(Initial Condition)
  \begin{enumerate}
    \item  Almost surely, for every $\ell\ge1$ and every function $f\in\C_b\hc$, the limit
       \[ \lim_{N\to\infty}\frac{1}{N}\sum_{i\in\Sn_\ell(0)}f(\ageni(0)). \]
       exist. Moreover, $\limsup_N\Eptil{ \xnbar(0) }<\infty$. \label{asm_initial_Xnu}
    \item For every $N$, the initial conditions of the $N$-server network are exchangeable, in the sense that for every permutation $\sigma$ on the set of server indices $\{1,...,N\}$, the distribution of the vector
    \begin{equation*}
        \left( \xni(0),\indicil{\xni(0)>0}\ageni(0);i=1,...,N\right)
    \end{equation*}
    does not depend on the choice of $\sigma$. \label{asm_initial_exchange}
  \end{enumerate}
\end{assumption}
Note that in particular, substituting $f=\overline G(\cdot+r)/\overline G(\cdot)$ in Assumption \ref{asm_initial}.\ref{asm_initial_Xnu} shows that for every $r\geq0$, the following limit exists:
\begin{equation}\label{def_ic}
    Z^0_\ell(r)\doteq \lim_{N\to\infty}\xinbar_\ell(0,r).
\end{equation}
This theorem is established in a companion paper \cite{AghRam15c}.

\begin{theorem}\label{thm_conv}
    Suppose $\lambda$ and $\overline{G}$ satisfy Assumptions \ref{asm_arrival} and \ref{asm_service}, let Assumption \ref{asm_initial}.\ref{asm_initial_Xnu} hold and let $Z_\ell^0, \ell \geq 1,$ be as in \eqref{def_ic}.   Then almost surely, for $\ell \geq 1$ and $t, r \geq 0$, the limit
    \begin{equation}
           Z_\ell(t,r)\doteq\lim_{N\to\infty} \xinbar_\ell(t,r),
    \end{equation}
    exists and is the unique solution of the hydrodynamic PDE \eqref{pde_eq1}-\eqref{pde_boundaryell} associated to     $(\lambda, Z_\ell^0, \ell \geq 1).$   if Assumption \ref{asm_initial}.\ref{asm_initial_exchange} also holds and
    \begin{equation}\label{extra_cond}
          \sum_{\ell\geq1}\sup_{N}\Probil{\xni(0)\geq \ell}<\infty,
    \end{equation}
    then for every $t\geq0$ we have
    \begin{equation}\label{thmextra_sum_eq}
         \sum_{\ell\geq 1}\sup_{N}\Ept{\xinbar_\ell(t,0)}< \infty.
    \end{equation}
\end{theorem}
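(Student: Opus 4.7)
The plan is to establish convergence of $\{\xinbar_\ell\}$ in three stages: tightness of the sequence, identification of every subsequential limit as a solution of the hydrodynamic PDE \eqref{pde_eq1}--\eqref{pde_boundaryell} associated with $(\lambda, Z^0_\ell; \ell\ge1)$, and invocation of Theorem \ref{thm_uniqueness} to promote subsequential weak convergence to almost-sure convergence of the full sequence. The initial data $Z^0_\ell(r)$ required in \eqref{pde_eq1}--\eqref{pde_eq} is supplied by \eqref{def_ic}, whose existence is guaranteed by Assumption \ref{asm_initial}.\ref{asm_initial_Xnu} applied to the test function $f(\cdot)=\overline G(\cdot+r)/\overline G(\cdot)$.

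For the semimartingale analysis I would work with the age-based empirical measure $\mu^{(N)}_\ell(t)=\sum_{i\in\Sn_\ell(t)}\delta_{\ageni(t)}$, noting that $\xin_\ell(t,r)=\langle\overline G(\cdot+r)/\overline G(\cdot),\mu^{(N)}_\ell(t)\rangle$. Between jumps each age evolves deterministically at unit speed, and the pointwise $a$-derivative of the test function $\varphi_r(a)=\overline G(a+r)/\overline G(a)$ is $(h(a)-h(a+r))\varphi_r(a)$, which is uniformly bounded by Assumption \ref{asm_service}. Jumps arise from two sources: (i) a departure from a server in $\Sn_{\ell+1}(u^-)$ occurs at instantaneous rate $h(\ageni(u))$ conditional on $\ageni(u)$, contributing a jump of magnitude $\varphi_r(0)-\varphi_r(\ageni(u)+r)$ to $\xin_\ell$, while a departure from a server in $\Sn_\ell(u^-)\setminus\Sn_{\ell+1}(u^-)$ simply removes its age contribution; (ii) an arrival routed to a queue of length exactly $\ell-1$ contributes $\overline G(r)$ to $\xin_\ell$, and under $SQ(2)$ the conditional routing probability is $\nunonebar_{\ell-1}(u^-)^2-\nunonebar_\ell(u^-)^2$ as computed in Section \ref{sec_routing}. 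Substituting $g(a)=h(a)\overline G(a)$ and integrating the aging-plus-departure compensator with respect to age, one recovers $-\int_0^t\overline G(t+r-u)\partial_r Z_{\ell+1}(u,0)\,du$ in the limit; factoring $\nunonebar_{\ell-1}^2-\nunonebar_\ell^2=(\nunonebar_{\ell-1}+\nunonebar_\ell)(\nunonebar_{\ell-1}-\nunonebar_\ell)$ and tracking the previously-in-service job at the targeted queue up to time $t+r$ via the ratio $(Z_{\ell-1}(u,t+r-u)-Z_\ell(u,t+r-u))/(Z_{\ell-1}(u,0)-Z_\ell(u,0))$ produces the third term of \eqref{pde_eq}. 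Tightness in $(t,r)$ follows from uniform equicontinuity (bounded drift by the hazard bound and the moment estimate $\limsup_N\Eptil{\xnbar(0)}<\infty$) together with Doob's $L^2$ inequality applied to the martingale part, whose predictable quadratic variation is $O(1/N)$ since each jump has size $O(1/N)$ and the rate is $O(N)$. The boundary equations \eqref{pde_boundary1}, \eqref{pde_boundaryell} are recovered by specializing $r=0$ in the mass-balance form of the same decomposition and using \eqref{ZSrelation}.

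For the auxiliary bound \eqref{thmextra_sum_eq}, I would exploit the exchangeability of Assumption \ref{asm_initial}.\ref{asm_initial_exchange} to rewrite $\Ept{\xinbar_\ell(t,0)}=\Probil{\xnX{1}(t)\ge\ell}$, then use the FIFO non-idling policy to bound $\xnX{1}(t)\le \xnX{1}(0)+A^{(N),1}(t)$, where $A^{(N),1}(t)$ counts arrivals routed to server $1$ during $[0,t]$. Under $SQ(2)$ the per-arrival probability of routing to server $1$ is at most $2/N$ (the chance that server $1$ is among the two sampled), so by Poisson thinning $A^{(N),1}(t)$ is stochastically dominated by a Poisson random variable with mean $(2a^{(N)}/N)\int_0^t\lambda(s)\,ds$, which is uniformly $O(\int_0^t\lambda(s)\,ds)$ in $N$ by Assumption \ref{asm_arrival}. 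Splitting $\Probil{\xnX{1}(t)\ge\ell}\le \Probil{\xnX{1}(0)\ge\lceil\ell/2\rceil}+\Probil{A^{(N),1}(t)\ge\lfloor\ell/2\rfloor}$, the first contribution is summable in $\ell$ uniformly in $N$ by \eqref{extra_cond} (noting exchangeability equates the one-server tail to the mean given in \eqref{extra_cond}), and the second is summable uniformly in $N$ by the super-geometric tails of a Poisson random variable with bounded mean. The main obstacle in the overall program is the infinite coupling between levels in \eqref{pde_eq}, where $Z_\ell$ depends on $\partial_r Z_{\ell+1}(\cdot,0)$: the system cannot be closed at any finite $\ell$, forcing tightness and identification to be carried out jointly across all $\ell$ in a topology (for instance weighted by the tail estimate in \eqref{extra_cond}) that accommodates the whole infinite tuple simultaneously.
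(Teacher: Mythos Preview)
The paper does not actually prove Theorem \ref{thm_conv} in the text you were given: Section \ref{sec_proof} states that ``the proof of Theorem \ref{thm_conv} is rather technical, and requires establishing a limit theorem for a sequence of interactive measure-valued processes describing the $N$-server network, which is carried out in a companion paper \cite{AghRam15c}.'' So there is no in-paper proof to compare against; the only hint is that the argument proceeds via measure-valued processes. Your outline (empirical age measures $\mu^{(N)}_\ell$, semimartingale decomposition, tightness, identification of subsequential limits as PDE solutions, then Theorem \ref{thm_uniqueness}) is exactly the shape such a proof takes and is consistent with that hint.

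One slip in your jump bookkeeping is worth flagging. For $\ell\ge2$, an arrival routed to a queue of length $\ell-1$ does \emph{not} contribute $\overline G(r)$ to $\xin_\ell(\cdot,r)$: the server was already busy, the job in service keeps its current age $a$, and the increment to $\xin_\ell$ is $\varphi_r(a)=\overline G(a+r)/\overline G(a)$, not $\varphi_r(0)$. (The $\overline G(r)$ contribution arises only for $\ell=1$, when the arrival lands at an idle server and enters service with age $0$; this is precisely why \eqref{pde_eq1} and \eqref{pde_eq} differ in their third terms.) You evidently know this, since you then correctly derive the $(Z_{\ell-1}+Z_\ell)(Z_{\ell-1}(u,t+r-u)-Z_\ell(u,t+r-u))$ structure by ``tracking the previously-in-service job,'' but the earlier sentence contradicts it. Similarly, the jump size at a departure from $\Sn_{\ell+1}$ should read $\varphi_r(0)-\varphi_r(\ageni(u^-))$, not $\varphi_r(0)-\varphi_r(\ageni(u)+r)$.

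Your argument for the auxiliary bound \eqref{thmextra_sum_eq} is correct and self-contained: the identity $\Ept{\xinbar_\ell(t,0)}=\Probil{\xnX{1}(t)\ge\ell}$ via exchangeability (the paper uses the same identity in \eqref{cortemp3}), the domination $\xnX{1}(t)\le \xnX{1}(0)+A^{(N),1}(t)$, the Poisson thinning bound on $A^{(N),1}(t)$ with routing probability at most $2/N$, and the tail split all go through. Since the paper gives no proof of this part either, your sketch here actually supplies more than the paper does.
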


\subsection{Typical Queue Length Distribution}\label{sec_conv_qlp}

As stated below, Theorem \ref{thm_conv} also allows us to characterize the distribution of a typical queue in the network and establish an asymptotic independence result.

\begin{theorem}\label{thm_ql}
Suppose $\lambda$, $\overline G$, $Z_\ell^0$  are as in Theorem \ref{thm_conv},  and let $(Z_\ell;\ell\ge1)$ be the unique solution to the hydrodynamic PDE associated to $(\lambda,Z_\ell^0;\ell\geq1)$. Then, for every $\ell\geq1$ and $t\geq0,$
\begin{equation}\label{Qconv}
  \lim_{N\to\infty} \Prob{\xnX{1}(t)\geq \ell} = Z_\ell(t,0).
\end{equation}
and the queue length of different servers are asymptotically independent, that is, for every $t\geq0$, $K\geq 1$ and $\ell_1,...,\ell_K\ge1$,
\begin{equation}\label{propOfChaos}
  \lim_{N\to\infty} \Prob{\xnX{1}(t)\geq \ell_1,....,\xnX{K}(t)\geq \ell_K}=\prod_{k=1}^K Z_{\ell_k}(t,0).
\end{equation}
\end{theorem}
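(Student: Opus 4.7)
The plan is to exploit the symmetry of the $N$-server dynamics together with Theorem \ref{thm_conv}. First, by \eqref{ZSrelation} and the scaling of the state descriptor,
\[ \xinbar_\ell(t,0) = \frac{1}{N}\sum_{i=1}^N \indic{\xnX{i}(t) \geq \ell}, \]
and Theorem \ref{thm_conv} asserts that this empirical fraction converges almost surely to the deterministic limit $Z_\ell(t,0)$. I would next verify that for every $t\geq 0$ the random vector $(\xnX{1}(t),\ldots,\xnX{N}(t))$ is exchangeable: this holds at $t=0$ by Assumption \ref{asm_initial}.\ref{asm_initial_exchange}, and is preserved along the dynamics because the cumulative arrival process is independent of server indices, the $SQ(2)$ rule samples the two probe queues uniformly and breaks ties uniformly at random, and service completions are driven only by i.i.d.\ service requirements attached to the individual jobs. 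Thus the joint law of queue lengths is invariant under any permutation of server labels at all times.

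Granted exchangeability, \eqref{Qconv} follows at once: we have $\Prob{\xnX{1}(t)\geq \ell}=\Ept{\xinbar_\ell(t,0)}$, and since $\xinbar_\ell(t,0)\in[0,1]$ converges a.s.\ to the constant $Z_\ell(t,0)$, the bounded convergence theorem gives the claim. For \eqref{propOfChaos} I would again invoke exchangeability to write
\[ \Prob{\xnX{1}(t)\geq \ell_1,\ldots,\xnX{K}(t)\geq \ell_K} = \Ept{\frac{1}{(N)_K}\sum_{\substack{i_1,\ldots,i_K \\ \text{distinct}}} \prod_{k=1}^K \indic{\xnX{i_k}(t)\geq \ell_k}}, \]
where $(N)_K=N(N-1)\cdots(N-K+1)$. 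A standard counting argument shows that the sum over distinct $K$-tuples differs from $N^K\prod_{k=1}^K \xinbar_{\ell_k}(t,0)$ by at most $O(N^{K-1})$ boundary terms (tuples in which at least two indices coincide). Dividing by $(N)_K = N^K(1+O(1/N))$, this error vanishes as $N\to\infty$; and because each factor $\xinbar_{\ell_k}(t,0)\in[0,1]$ converges a.s.\ to the deterministic constant $Z_{\ell_k}(t,0)$, bounded convergence yields $\Ept{\prod_{k=1}^K \xinbar_{\ell_k}(t,0)}\to \prod_{k=1}^K Z_{\ell_k}(t,0)$, which gives \eqref{propOfChaos}.

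The main conceptual obstacle is the careful justification that exchangeability is preserved for every $t>0$: one must check that every jump of the state process, whether driven by an arrival routed via the $SQ(2)$ probing or by a service completion governed by the age process, leaves the joint distribution invariant under permutations of server indices. Once that symmetry is established, the remainder of the argument is a routine consequence of exchangeability, the almost-sure convergence provided by Theorem \ref{thm_conv}, and bounded convergence, so the asymptotic independence of any fixed finite collection of queues drops out essentially for free.
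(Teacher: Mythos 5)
Your proof is correct and takes essentially the same route as the paper: exchangeability of the queue-length vector (preserved under the index-symmetric arrival, routing, and service dynamics), the almost-sure convergence of $\xinbar_\ell(t,0)$ to the deterministic constant $Z_\ell(t,0)$ supplied by Theorem \ref{thm_conv}, and the bounded convergence theorem. One small point in your favor: in the propagation-of-chaos step you explicitly account for the $O(N^{K-1})$ tuples with repeated indices when passing between $\Ept{\prod_{k=1}^K \xinbar_{\ell_k}(t,0)}$ and $\Prob{\xnX{1}(t)\geq\ell_1,\ldots,\xnX{K}(t)\geq\ell_K}$; the paper asserts this identity as exact for every finite $N$ ``by exchangeability,'' which is not quite right, since exchangeability identifies only the probabilities indexed by tuples of \emph{distinct} servers, and the equality really holds only after passing to the $N\to\infty$ limit, as your counting argument correctly shows.
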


\subsection{Convergence of Virtual Waiting Times}\label{sec_conv_wait}

The virtual waiting time at any time $t$ is the time that a virtual customer that hypothetically arrives at $t$ has to wait in order to receive service. As another application of Theorem \ref{thm_conv}, our next result shows that for large $N$, the hydrodynamic PDE also provides an approximation to the mean virtual waiting time in an $N$-server network.

\begin{theorem}\label{thm_wt}
Suppose $\lambda$, $\overline G$, $Z_\ell^0$  are as in Theorem \ref{thm_conv},  and let $(Z_\ell;\ell\ge1)$ be the unique solution to the hydrodynamic PDE associated to $(\lambda,Z_\ell^0;\ell\geq1)$. If, in addition, the initial queue lengths satisfy \eqref{extra_cond} and $\ageni(0)<T_0$ for some $T_0<\infty$ and every $i=1,...,N$, then
\begin{align}\label{Eptconv}
    \lim_{N\to\infty}\Ept{\waitn(t)} & = \sum_{\ell\ge 2}Z_{\ell}(t,0)^2+\sum_{\ell\ge 1} \left[ Z_{\ell}(t,0)+Z_{\ell+1}(t,0)\right] \int_{0}^{\infty} \left[Z_{\ell}(t,r)-Z_{\ell+1}(t,r)\right]dr.
  \end{align}
\end{theorem}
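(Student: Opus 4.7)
The plan is to derive an exact expression for $\Ept{\waitn(t)}$ as a functional of the state descriptors $\xinbar_m(t,\cdot)$ and $\nunonebar_m(t)$ by conditioning on the filtration at time $t$, and then to identify its limit as $N\to\infty$ by combining Theorem \ref{thm_conv} with dominated convergence.

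To obtain the functional, condition on the network state at time $t$ and use the SQ(2) routing rule: the virtual arrival samples two queues uniformly at random and joins the shorter one. Up to an $O(1/N)$ sampling-without-replacement correction, the probability that the selected queue has length exactly $m\ge 1$ equals $\nunonebar_m(t)^2-\nunonebar_{m+1}(t)^2$, and by exchangeability the selected server is uniformly distributed over all length-$m$ servers. Under FIFO service, the wait at such a server equals the residual service time of the job currently in service plus the service requirements of the $m-1$ waiting jobs. Since service times are i.i.d.\ with mean $1$, and the expected residual time at a server of age $a$ is $\int_0^\infty\overline G(a+r)/\overline G(a)\,dr$, summing residual contributions over length-$m$ servers matches exactly $\int_0^\infty(\xin_m(t,r)-\xin_{m+1}(t,r))\,dr$. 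Using the identity $(\nunonebar_m^2-\nunonebar_{m+1}^2)/(\nunone_m-\nunone_{m+1}) = (\nunonebar_m+\nunonebar_{m+1})/N$ and an Abel summation applied to the arithmetic $(m-1)$ contribution from queued jobs, I obtain
\[
\Ept{\waitn(t)} = \Ept{\sum_{m\ge1}\bigl(\nunonebar_m(t)+\nunonebar_{m+1}(t)\bigr)\int_0^\infty\bigl(\xinbar_m(t,r)-\xinbar_{m+1}(t,r)\bigr)\,dr + \sum_{m\ge2}\nunonebar_m(t)^2} + O(1/N),
\]
which is exactly the pre-limit analog of the right-hand side of \eqref{Eptconv}.

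It remains to pass to the limit through the expectation, the infinite sum, and the improper integral in $r$. Theorem \ref{thm_conv} supplies the pointwise convergence $\xinbar_m(t,r)\to Z_m(t,r)$ and $\nunonebar_m(t)\to Z_m(t,0)$. The hypothesis $\ageni(0)<T_0$, combined with the bounded hazard rate from Assumption \ref{asm_service}, yields $\overline G(\ageni(t))\ge\overline G(T_0+t)$ uniformly in $N$ and $i$, so $\int_0^\infty\overline G(a+r)/\overline G(a)\,dr\le 1/\overline G(T_0+t)$ for every server, which provides a telescoping domination for the first sum. For the second sum, \eqref{thmextra_sum_eq} of Theorem \ref{thm_conv}, guaranteed by \eqref{extra_cond}, gives $\sum_{m}\sup_N\Ept{\nunonebar_m(t)^2}\le\sum_{m}\sup_N\Ept{\nunonebar_m(t)}<\infty$ since $\nunonebar_m\in[0,1]$, delivering the uniform integrability needed for term-by-term convergence. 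The main obstacle, I expect, is the joint control of tails in $m$ and $r$ in the first summand; a clean route is to truncate the $r$-integration at a large cutoff $M$, use the residual-time bound together with the bounded hazard rate to make the $r>M$ contribution uniformly small in $N$, and then invoke the bounded pointwise convergence of Theorem \ref{thm_conv} on $[0,M]$ together with the tail estimate above to finish.
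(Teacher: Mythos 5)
Your proof is correct and follows essentially the same route as the paper: the same decomposition of the virtual wait into the residual of the job in service plus the $(m-1)$ fresh services of queued jobs, the same conditioning on the network state to obtain the pre-limit functional via the routing probability $\nunonebar_m^2-\nunonebar_{m+1}^2$ and the residual-time identity $\Ept{\residni(t)\mid\ageni(t)}=\int_0^\infty\overline G(\ageni(t)+r)/\overline G(\ageni(t))\,dr$, and the same passage to the limit using Theorem \ref{thm_conv} together with the summability bound \eqref{thmextra_sum_eq} and the uniform integral bound $\int_0^\infty\xinbar_\ell(t,r)\,dr\le 1/\overline G(T_0+t)$ coming from $\ageni(0)<T_0$. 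One small clarification: since the $SQ(d)$ policy as defined here samples the $d$ queues independently and uniformly \emph{with} replacement, the routing probability is exact, so the $O(1/N)$ sampling-without-replacement correction you hedge on is not actually present.
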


\begin{remark}
The uniform boundedness assumption on the initial ages is reasonable for transient analysis since it will be satisfied by any network that started empty a finite time interval ago.    However, the assumption can be relaxed, as illustrated in Figure ~\ref{Figure:waiting_gamma} in Section \ref{sec_moregeneral}. We impose it only to simplify the proofs.
\end{remark}

\begin{remark}
We can show that in fact, as $N \rightarrow \infty$, the sequence of virtual waiting time distributions (and not just their means) converge to a limit distribution whose characteristic function can be expressed as a functional of the  solution to the hydrodynamic PDE. We do not present the details of the proof, but it is similar to that of Theorem \ref{thm_wt}.
\end{remark}

\section{Proofs of Main Theorems}\label{sec_proof}
Now we prove the main Theorems of the paper stated in Section \ref{sec_main}. The uniqueness result of Theorem \ref{thm_uniqueness} is first proved in Section \ref{sec_pdeUnique}. The proof of Theorem \ref{thm_conv} is rather technical, and requires establishing a limit theorem for a sequence of interactive measure-valued processes describing the $N$-server network, which is carried out in a companion paper \cite{AghRam15c}. Theorem \ref{thm_ql} is proved in Section \ref{sec_proofQL}. Then in section \ref{sec_routing}, we carry out the calculation to compute the so-called routing probabilities corresponding to the $SQ(d)$ algorithm, which is required for the proof of Theorem \ref{thm_wt}. We have singled out this  calculation because it is the only part of the proof which depends on the particular load-balancing algorithm invoked in the network. The proof of Theorem \ref{thm_wt} is then given in Section \ref{sec_proofWT}.

\subsection{Proof of the Uniqueness Theorem}\label{sec_pdeUnique}
Throughout this section, we adopt the following notation. For every function $f$ in $\hc,$ that is bounded on finite intervals and every $T\geq0$, we denote \[\|f\|_T\doteq\sup_{0\leq t\leq T}|f(t)|.\]
Also, for functions $f_1,f_2$ on $\hc$, $f_1*f_2(t)\doteq\int_0^tf_1(s)f_2(t-s)ds$ is the (one-sided) convolution of $f_1$ and $f_2$.

\begin{proof}[Proof of Theorem \ref{thm_uniqueness}]
Fix a non-negative function $\lambda\in\mathbb{L}_\text{loc}^1\hc$ and $Z^0=(Z^0_\ell;\ell\geq1)$, and let $Z=(Z_\ell;\ell\geq1)$ and $\widetilde Z=(\widetilde Z_\ell;\ell\geq1)$ both be solutions to the hydrodynamic PDE \eqref{pde_eq1}-\eqref{pde_boundaryell} associated to $(\lambda,Z^0)$. Defining the functions
  \begin{equation*}
    D_\ell(t)\doteq -\int_0^t\partial_rZ_\ell(s,0)ds,\quad\quad t\geq0,
  \end{equation*}
and using integration by parts, we can rewrite
\begin{align}\label{temp_uniq1}
   \int_0^t \overline G(t+r-s)\partial_r Z_{\ell+1}(s,0)ds  = -\overline G(r)D_{\ell+1}(t)+ \int_0^t D_{\ell+1}(s)g(t+r-s)ds.
\end{align}
Substituting \eqref{temp_uniq1} into \eqref{pde_eq1} and \eqref{pde_eq}, and the definition of $D_\ell$ into \eqref{pde_boundary1} and \eqref{pde_boundaryell}, we see that for every $\ell \geq 1$, $Z_\ell$ satisfies for $t, r \geq 0,$
  \begin{align*}
    Z_\ell(t,r)& =Z^0_\ell(r+t)+\overline G(r)D_{\ell+1}(t) + R_\ell(t,r)  - \int_0^t D_{\ell+1}(s)g(t+r-s)ds
  \end{align*}
with boundary condition
  \begin{equation*}
    Z_\ell(t,0)=Z^0_\ell(0)-D_\ell(t)+D_{\ell+1}(t)+\Lambda_\ell(t),
  \end{equation*}
where
\begin{equation}\label{def_R}
   R_\ell(t)=\twopartdef{\int_0^t\lambda(s)\overline G(t+r-s)\big(1-Z_1^2(s,0)\big) ds}{\ell=1,}{\int_0^t\lambda(s) \big(Z_{\ell-1}(s,0)-Z_\ell(s,0)\big)\big(Z_{\ell-1}(s,t+r-s)-Z_{\ell}(s,t+r-s)\big)ds}{\ell\geq2,}
\end{equation}
and
\begin{equation}\label{def_Lambda}
  \Lambda_\ell(t)\doteq \twopartdef{\int_0^t\lambda(s)\big(1-Z^2_1(s,0)\big)ds}{\ell=1,} {\int_0^t\lambda(s)\big(Z^2_{\ell-1}(s,0)-Z^2_\ell(s,0)\big)ds}{\ell\geq2.}
\end{equation}
Similarly, $\widetilde Z$ satisfies analogous equations. Defining $\Delta H_\ell\doteq H_\ell - \widetilde H_\ell $ for $H=Z,D,R,\Lambda$ and $\ell\geq1$, for all $\ell\geq1$ and $t,r\geq0$ we have
  \begin{align}\label{temp_delta}
    \Delta Z_\ell(t,r)& =\overline G(r)\Delta D_{\ell+1}(t) + \Delta R_\ell(t,r)  - \int_0^t \Delta D_{\ell+1}(s)g(t+r-s)ds,
  \end{align}
with boundary condition
  \begin{equation}\label{temp_bdry}
    \Delta Z_\ell(t,0)=-\Delta D_\ell(t)+\Delta D_{\ell+1}(t)+\Delta \Lambda_\ell(t).
  \end{equation}
Now for every $t\geq0$ and $\ell\geq1$, define
\begin{equation}\label{def_V}
  V_\ell(t)\doteq \sup_{r\geq0}|\Delta Z_\ell(t,r)|.
\end{equation}
Note that $V_\ell$ is measurable because it is the supremum over measurable functions, and is bounded by 2 because  $Z_\ell(t,\cdot)$ and $\widetilde Z_\ell(t,\cdot)$ are both in $\cbone.$ By definition \eqref{def_R} of $R_1$, we have,
\begin{align*}
  \Delta R_1(t,r)& = -\int_0^t \lambda(s)\overline G(t+r-s)\big(Z_1(s,0)+\widetilde Z_1(s,0)\big)\Delta Z_1(s,0)ds,
\end{align*}
and hence,
\begin{equation}\label{bd_r1}
  |\Delta R_1(t,r)|\leq  2\int_0^t \lambda(s)V_1(s)ds,\quad\quad \forall t,r\geq0.
\end{equation}
Similarly for $\ell\geq2$, by definition \eqref{def_R} of $R_\ell,$
\begin{align*}
  \Delta R_\ell(t,r)& = \int_0^t \lambda(s)\big(Z_{\ell-1}(s,0)+Z_\ell(s,0)\big)\big(\Delta Z_{\ell-1}(s,t+r-s)-\Delta Z_\ell(s,t+r-s)\big)ds\\
  &\quad +\int_0^t \lambda(s)\big(\Delta Z_{\ell-1}(s,0)+\Delta Z_\ell(s,0)\big)\big(\widetilde Z_{\ell-1}(s,t+r-s)-\widetilde Z_1(s,t+r-s)\big)ds,
\end{align*}
and hence, for all $t,r\geq0$
\begin{equation}\label{bd_rell}
  |\Delta R_\ell(t,r)|\leq  4\int_0^t \lambda(s)\big(V_{\ell-1}(s)+V_\ell(s)\big)ds.
\end{equation}

Furthermore, to bound $\Delta D_\ell$ for  $\ell\geq2,$ we substitute $\Delta D_{\ell+1}$

from \eqref{temp_bdry} in \eqref{temp_delta} and set $r =0$ to conclude that $\Delta D_\ell$ satisfies the renewal equation
\[\Delta D_{\ell}(t)=g*\Delta D_{\ell}(t)+F_\ell(t),\quad\quad t\geq0,\]
with
\begin{equation*}
  F_\ell(t)\doteq\Delta\Lambda_\ell(t)-g*\Delta\Lambda_\ell(t) + g*\Delta Z_\ell(t,0)-\Delta R_\ell(t,0).
\end{equation*}
Fix $\ell\geq2$ and note that by definition \eqref{def_Lambda} of $\Lambda_\ell$,
%
\begin{align*}
  \Delta\Lambda_\ell(t)& = \int_0^t \lambda(s) \big( Z_{\ell-1}(s,0) +\widetilde Z_{\ell-1}(s,0)\big)\Delta Z_{\ell-1}(s,0)ds  - \int_0^t \lambda(s) \big( Z_{\ell}(s,0) +\widetilde Z_{\ell}(s,0)\big)\Delta Z_{\ell}(s,0)ds,
\end{align*}
and hence,
\begin{equation}\label{bd_Lell}
  |\Delta\Lambda_\ell(t)|\leq 4\int_0^t\lambda(s)\big(V_{\ell-1}(s)+V_\ell(s)\big)ds,\quad\quad \forall t\geq0.
\end{equation}
Recall that $V_\ell(t)\leq 2$ for all $t\geq0$ and $\ell\geq1$, and due to the local integrability of $\lambda,$ \eqref{bd_Lell} implies that $\Delta\Lambda_\ell(t)$ is uniformly bounded on any finite interval $t\in[0,T]$. Hence,
\[g*\Delta\Lambda_\ell(t)\leq \|\Delta\Lambda\|_T\int_0^tg(s)ds<\infty,\quad\quad t\in[0,T].\]
Similarly, the bound \eqref{bd_rell} shows that $\Delta R_\ell(t)$ is also bounded for finite $t.$ Therefore, $F_\ell$ is bounded on finite intervals, and hence by the renewal theorem (see Theorem V.2.4 in \cite{Asm03})
\[\Delta D_\ell(t)  = \int_0^t F_\ell(t-s) dU_G(s),\]
where $U_G$ is the renewal measure corresponding to the service distribution $G$. Since $G$ has a density $g$, $U_G$ satisfies
\[U_G(x)=1+\int_0^xu_G(y)dy,\quad\quad x\geq0,\] and the density $u_G$ is bounded on finite intervals and satisfies the equation $u_G=g*u_G+g$ due to Proposition V.2.7 in \cite{Asm03}. Therefore, $\Delta D_\ell(t)$ can be written as
\begin{align}\label{uniq_D}
  \Delta D_\ell(t)  & = F_\ell(t)+ u_G*F_\ell(t)  =\Delta\Lambda_\ell(t) +  u_G*\Delta Z_\ell(t,0) - \Delta R_\ell(t,0) -u_G*\Delta R_\ell(t,0).
  \end{align}
For every fixed $T\geq0$ and all $0\leq t\leq T$, by the definition of the convolution operator and \eqref{def_V},
\begin{align}\label{bd_xi}
  |u_G*\Delta Z_\ell(t,0)|  &\leq \int_0^t u_G(s) |\Delta Z_\ell(t-s,0)|ds \leq \|u_G\|_T \int_0^t V_\ell(s)ds.
\end{align}
Also, using the bound \eqref{bd_rell} we have
\begin{align}\label{bd_uR}
  |u_G*\Delta R_\ell(t,0)| \leq &4\int_0^t u_G(t-s) \int_0^s \lambda(v) \big(V_{\ell-1}(v)+V_\ell(v)\big)dv\; ds \notag\\
                         = &4\int_0^t \lambda(v) \big(V_{\ell-1}(v)+V_\ell(v)\big) \int_v^{t} u_G(t-s)ds\;dv\notag\\
                         \leq &4U_G(T)\int_0^t \lambda(s) \big(V_{\ell-1}(s)+V_\ell(s)\big)ds.
\end{align}
Bounding the terms on the right-hand side of \eqref{uniq_D} using inequalities \eqref{bd_rell}, \eqref{bd_Lell}, \eqref{bd_xi} and \eqref{bd_uR}, for every $\ell\geq 2$ and $t\leq T$, we have
\begin{equation}\label{bd_D}
  \|\Delta D_{\ell}\|_t \leq C_T \int_0^t (1+\lambda(s))(V_{\ell-1}(s)+V_{\ell}(s))ds,
\end{equation}
with $C_T\doteq 8+4U_G(T)+\|u_G\|_T$.
Next, substituting the bound \eqref{bd_D}, but with $\ell$ replaced by $\ell+1$, \eqref{bd_r1} and \eqref{bd_rell} into \eqref{temp_delta}, we obtain that for all $0\leq t\leq T$ and $r\geq0$,
\begin{equation}\label{bd_Endxi1}
  |\Delta Z_1(t,r)|\leq 4(C_T\overline G(r)+1)  \int_0^t (1+\lambda(s))(V_1(s)+V_2(s))ds,
\end{equation}
and for $\ell\geq2$,
\begin{align}\label{bd_Endxiell}
  |\Delta Z_\ell(t,r)|\leq 4(C_T\overline G(r)+1) \int_0^t (1+\lambda(s))(V_{\ell-1}+V_\ell(s)+V_{\ell+1}(s))ds.
\end{align}
Taking supremum over $r\geq0$ on both sides of \eqref{bd_Endxi1} and \eqref{bd_Endxiell}, we have
\begin{equation}\label{temp_V1}
  V_1(t)\leq 8C_T \int_0^t (1+\lambda(s))(V_1(s)+V_{2}(s))ds,
\end{equation}
and for $\ell\geq2$,
\begin{equation}\label{temp_Vell}
  V_\ell(t)\leq 8C_T  \int_0^t (1+\lambda(s))(V_{\ell-1}(s)+V_\ell(s)+V_{\ell+1}(s))ds.
\end{equation}
Now define
\begin{equation}\label{def_Vtotal}
  V(t)\doteq\sum_{\ell\geq1} 2^{-\ell}V_\ell(s).
\end{equation}
Note that $V$ is measurable, and $V(t)=0$ if and only if $V_\ell(t)=0$ for all $\ell\geq1$. Considering the weighted sums of both sides of \eqref{temp_V1} and \eqref{temp_Vell} over $\ell \geq 2$, we obtain
\begin{equation}
    V(t) \leq   26C_T \int_0^t(1+\lambda(s))V(s)ds,\quad\quad 0\leq t\leq T.
\end{equation}
An application of Gronwall's inequality shows that $V\equiv 0$, and hence, $Z_\ell=\widetilde Z_\ell$ for all $\ell\geq0.$ This completes the proof.
\end{proof}

\subsection{Proof of Theorem \ref{thm_ql}}\label{sec_proofQL}
\begin{proof}[Proof of Theorem \ref{thm_ql}]
First note that since the routing algorithm is symmetric with respect to queue indices, the queue lengths and age distributions, which are initially exchangeable by \ref{asm_initial}.\ref{asm_initial_exchange}, remain exchangeable at all finite times $t\geq0$. In particular, the distribution of the vector $(\xnX{\sigma(i)}(t);i=1,...,N)$ is the same for every permutation $\sigma$ on $\{1,2,...,N\}$. Since $\xin_\ell(t,0)=\nunone_\ell(t)$ and by definition \eqref{def_sl} of $\nunone_\ell(t)$, we have
\begin{align}\label{cortemp3}
  \Ept{\xinbar_\ell(t,0)} = \frac{1}{N}\Ept{\sum_{i=1}^N \indic{\xni(t)\geq\ell}}   =\frac{1}{N}\sum_{i=1}^N \Prob{\xni(t)\geq\ell} =\Prob{\xnX{1}(t)\geq\ell},
\end{align}
where the last equality is due to exchangeability. By  Theorem \ref{thm_conv}, $\xinbar_\ell(t,0)$ converges to $Z_\ell(t,0)$ as $N\to\infty$, almost surely, and since $\xinbar_\ell(t,0) = \nunonebar_\ell(t)$ is bounded by $1$, the convergence also holds in expectation by the bounded convergence theorem. Therefore, \eqref{Qconv} follows on taking the limit as $N\to\infty$ of both sides of \eqref{cortemp3}.

Similarly,  for $m=1, \ldots, n$, since $\xin_{\ell_m}(t,0)=\nunone_{\ell_m}(t)$  by \eqref{ZSrelation} and $\nunone_\ell(t)$ is defined by \eqref{def_sl}, we have
\begin{align}\label{temp_cor2}
  \Ept{\prod_{m=1}^n \xinbar_{\ell_m}(t,0)} & = \frac{1}{N^n}\Ept{\sum_{i_1=1}^N...\sum_{i_n=1}^N \indic{\xnX{i_1}(t)\geq\ell_1}...\indic{\xnX{i_n}(t)\geq\ell_n}}\notag\\
  & = \frac{1}{N^n}\sum_{i_1=1}^N...\sum_{i_n=1}^N \Prob{\xnX{i_1}(t)\geq\ell_1,\xnX{i_n}(t)\geq\ell_n}\notag\\
  & = \Prob{\xnX{1}(t)\geq \ell_1,...,\xnX{n}(t)\geq \ell_n},
\end{align}
where the last equality is again due to exchangeability. By another use of Theorem \ref{thm_conv}, $\prod_{m=1}^n \xin_{\ell_m}(t,0),$ converges to $\prod_{m=1}^n Z_{\ell_m}(t,0)$ as $N\to\infty$, almost surely, and in expectation, using the bounded convergence theorem. Taking the limit as $N\to\infty$ of both sides of \eqref{temp_cor2}, we obtain \eqref{propOfChaos}.
\end{proof}

\subsection{Routing Probabilities}\label{sec_routing}

For every server index $i$ and queue length $\ell$, define the routing probability $p(i,\ell;t)$ to be the conditional probability, given the state of the network at time $t$, that the load-balancing algorithm would route a hypothetical job arriving at time $t$ to server $i$, when its queue length is $\ell$. Defining the vector  $\vX(t)$ of all queue lengths and  ages,
\begin{equation}\label{def_vx}
  \vX(t)\doteq\left(\xni(t),\ageni(t);i=1,...,N\right).
\end{equation}
and denoting  by $\kappa(t)$ the (random) index to which the virtual job arriving at time $t$ is routed,  the routing probability $p(i,\ell;t)$ is defined by
\begin{equation}\label{def_routing}
  p(i,\ell,t)\doteq \indic{\xni(t)=\ell}\Prob{\kappa(t)=i|\vX(t)}.
\end{equation}
Now we compute the routing probabilities $p(i,\ell;t)$ for $\ell\geq 1$ and under the $SQ(2)$ algorithm described in Section \ref{sec_desc}. If $\kappa_1$ and $\kappa_2$ are indices  of queues chosen independently and uniformly at random, then $\kappa(t)$ is the index associated with the shorter queue (with ties being broken uniformly at random). The job is routed to a server of queue length exactly $\ell$ if and only if both $\kappa_1$ and $\kappa_2$ have queue lengths at least $\ell$, and at least one of them has queue length $\ell$. Given the vector of all queue lengths in $\vX(t)$, this happens with probability $(\nunonebar_\ell(t))^2-(\nunonebar_{\ell+1}(t))^2$. Since all servers with queue length equal to $\ell$ are equally likely to be chosen and there are $\nunone_\ell(t)-\nunone_{\ell+1}(t)$ of them, for $\ell\geq 1$ we have
\begin{align}\label{routingell}
   p(i,\ell;t) = \indic{\xni(t)=\ell}\frac{1}{N}\left(\nunonebar_\ell(t)+\nunonebar_{\ell+1}(t)\right)
\end{align}
\begin{remark}
    Note that the form of the routing probability for the $SQ(d)$ algorithm is clearly the same as in the exponential case.    To apply our framework to other load balancing algorithms, one would have to replace the expression in \eqref{routingell} with the routing probabilities associated with that algorithm.
\end{remark}

\subsection{Proof of Theorem \ref{thm_wt}}\label{sec_proofWT}

\begin{proof}[Proof of Theorem \ref{thm_wt}]
Suppose that the virtual job arriving at time $t$ is routed to queue $i$, that is $\kappa(t)=i$. If the server $i$ is idle (i.e., $\xni(t)=0$), the virtual waiting time is zero; otherwise if $\xni(t)=\ell$ for some $\ell\geq$ 1, the virtual waiting time $\waitn(t)$ is the sum of service times $v_j$ of jobs waiting in queue $i$ plus the residual time $\residni(t)$  of the job in service at server $i$ at time $t$, that is,
\[\indic{\kappa(t)=i}\waitn(t)=\sum_{j=1}^{\ell-1}v_j+\residni(t).\]
Summing over all possible queue indices $i$ and queue lengths $\ell$, we have
\begin{equation}\label{W}
  \waitn(t)  = \waitn_1(t)+\waitn_2(t),
\end{equation}
where
\begin{equation}\label{W1}
    \waitn_1(t)\doteq\sum_{i=1}^N\sum_{\ell\geq 1}\indic{\kappa(t)=i,\xni(t)=\ell}\sum_{j=1}^{\ell-1}v_j,
\end{equation}
and
\begin{equation}\label{W2}
   \waitn_2(t)\doteq\sum_{i=1}^N\sum_{\ell\geq 1}\indic{\kappa(t)=i,\xni(t)=\ell}\residni(t).
\end{equation}
To compute the expectation of $ \waitn_1(t),$ note that by Assumption \ref{asm_service}, the service times  $v_j$ of jobs that are still waiting in queues satisfy $\Eptil{v_j}=1$ and are independent of all queue lengths and ages at time $t$, as well as $\kappa(t)$. Therefore, taking the conditional expectation given $\vX(t)$ of both sides of \eqref{W1} and using the definition \eqref{def_routing} of $p(i,\ell,t)$, we have
\begin{align*}
 \Ept{\waitn_1(t)|\vX(t)} & = \sum_{i=1}^N\sum_{\ell\geq1}  \Prob{\kappa(t)=i,\xni(t)=\ell\big|\vX(t)} \sum_{j=1}^{\ell-1}\Ept{v_j}\\
& = \sum_{\ell\geq1}(\ell-1) \sum_{i=1}^N \indic{\xni(t)=\ell}p(i,\ell;t).
\end{align*}
Taking expectations of both sides of the last equation,  substituting $p(i,\ell;t)$ from \eqref{routingell}, recalling that the number of servers with queue length equal to $\ell$ is $\nunone_\ell(t)-\nunone_{\ell+1}(t)$, and using the equality $\nunone_\ell(t)=\xin_\ell(t,0)$ from \eqref{ZSrelation}, we see that
\begin{align}\label{W1final}
\Eptil{\waitn_1(t)} & =\frac{1}{N}\Ept{\sum_{\ell\geq1}(\ell-1) \left(\nunonebar_\ell(t)+\nunonebar_{\ell+1}(t)\right)\sum_{i=1}^N \indic{\xni(t)=\ell}}\notag\\
& =\frac{1}{N}\sum_{\ell\geq1}(\ell-1)\Ept{ \left(\nunonebar_\ell(t)+\nunonebar_{\ell+1}(t)\right)\left(\nunone_\ell(t)-\nunone_{\ell+1}(t)\right)}\notag\\
& =\sum_{\ell\geq1}(\ell-1) \Ept{\left(\xinbar_\ell(t,0)\right)^2-\left(\xinbar_{\ell+1}(t,0)\right)^2}\notag\\
& =\sum_{\ell\geq2}\Ept{\left(\xinbar_\ell(t,0)\right)^2}.
\end{align}

To compute the expectation of $\waitn_2(t)$, note that according to the $SQ(2)$ algorithm, the random queue indices $\kappa_1$ and $\kappa_2$ are independent of all other random variables, and therefore, given $\vX(t)$, $\kappa(t)$ is independent of all residual service times. Therefore, taking conditional expectations of both sides of \eqref{W2} and invoking the definition \eqref{def_routing} of $p(i,\ell;t)$ again, we have
\begin{align}\label{tempW}
& \Ept{\waitn_2(t)|\vX(t)}= \sum_{\ell\geq1} \sum_{i=1}^N \indic{\xni(t)=\ell}p(i,\ell;t) \Ept{\residni(t)|\vX(t)}.
\end{align}
The residual service time $\residni(t)$ of the job that begins being processed by server $i$ at time $t$ satisfies
\[\residni(t)=v_{J(i;t)} -\ageni(t),\]
where $J(i;t)$ is the index of the job begin processes in server $i$ at time $t$. It follows from the i.i.d assumption on the service times and their independence from the arrival process (Assumption \ref{asm_service}) that the residual service time $\residni(t)$ depends on the state variable $\vX(t)$ only through the age $\ageni(t)$. A complete rigorous justification of this intuitive assertion is rather long and technical, and hence will be presented elsewhere. Hence using equation (3.2) of Section V.3 in \cite{Asm03} in the second equality below, for every $r\geq0$, we have
\begin{align}
  \Prob{\residni(t)>r|\vX(t)}& =\Prob{\residni(t)>r|\ageni(t)}=\frac{\overline G(\ageni(t)+r)}{\overline G(\ageni(t))},\notag
\end{align}
and therefore,
\begin{align}\label{bdist}
  \Ept{\residni(t)|\vX(t)} =\int_0^\infty\frac{\overline G(\ageni(t)+r)}{\overline G(\ageni(t))}\;dr.
\end{align}
Now, substituting equations \eqref{routingell} and \eqref{bdist} in \eqref{tempW}, taking expectations of both sides, and using definition \eqref{def_xin} of $\xin_\ell$ and equation \eqref{ZSrelation}, we can see that
\begin{align}\label{W2final}
\Eptil{\waitn_2(t)} & = \sum_{\ell\geq1} \frac{1}{N}\mathbb{E}\left[\left(\nunonebar_\ell(t)+\nunonebar_{\ell+1}(t)\right)
\int_0^\infty \sum_{i=1}^N\indic{\xni(t)=\ell}  \frac{\overline G(\ageni(t)+r)}{\overline G(\ageni(t))} dx\right]\notag\\
&=\sum_{\ell\geq1} \mathbb{E}\left[\left(\xinbar_\ell(t,0)+\xinbar_{\ell+1}(t,0)\right)\int_0^\infty \left(\xinbar_\ell(t,r)-\xinbar_{\ell+1}(t,r) \right)  dr\right].
\end{align}

Finally, taking the limit as $N \rightarrow \infty$ in \eqref{W2final}, we obtain
\begin{equation}\label{temp_w1}
  \lim_{N\to\infty}\Ept{\waitn_1(t)}=\sum_{\ell\geq2}\lim_{N\to\infty}\Ept{\xinbar_\ell(t,0)^2}
  = \sum_{\ell\geq2}\left(Z_\ell(t,0)\right)^2.
\end{equation}
The exchange of limit and summation in the first equality is justified by the bound \eqref{thmextra_sum_eq}, $\xinbar_\ell(t,0)\leq 1$ and the dominated convergence theorem,  while the second equality follows from Theorem \ref{thm_conv} and the bounded convergence theorem. Moreover, by the uniform boundedness assumption on the ages imposed in Theorem \eqref{thm_wt}, $\ageni(t)<T_0+t$. Therefore, since $\int_0^\infty \overline{G}(r)dr = 1$ because the service time has mean $1$, and by definition \eqref{def_xin} of $\xin_\ell$,
\begin{align}\label{integral_bound}
  \int_0^t\xinbar_\ell(t,r)dr & \leq \frac{1}{N}\int_0^t \sum_{i\in\Sn_\ell(t)}\frac{\overline G(r)}{\overline G(T_0+t)}dr\leq \frac{1}{\overline G(T_0+t)}\nunonebar_\ell(t)\int_0^t\overline G(r)dr \leq \frac{1}{\overline G(T_0+t)}.
\end{align}
Therefore, taking the limit as $N\to\infty$ of both sides of \eqref{W2final}, we have
\begin{align}\label{temp_w2}
  \lim_{N\to\infty}\Ept{\waitn_2(t)} & =\sum_{\ell\geq1}\lim_{N\to\infty} \mathbb{E}\left[\left(\xinbar_\ell(t,0)+\xinbar_{\ell+1}(t,0)\right)\int_0^\infty \left(\xinbar_\ell(t,r)-\xinbar_{\ell+1}(t,r) \right)  dr\right]\notag\\
  & =\sum_{\ell\geq1} \mathbb{E}\left[\left(Z_\ell(t,0)+Z_{\ell+1}(t,0)\right)\int_0^\infty \left(Z_\ell(t,r)-Z_{\ell+1}(t,r) \right)  dr\right].
\end{align}
The exchange of limit and summation in the first equality is justified by \eqref{thmextra_sum_eq}, $\xinbar_\ell(t,r)\leq \xinbar_\ell(t,0)\leq 1$ and the dominated convergence theorem,  while the second equality holds due to Theorem \ref{thm_conv}, \eqref{integral_bound} and the bounded convergence theorem. The result follows from \eqref{temp_w1} and \eqref{temp_w2}.
\end{proof}


In this section we validate the results in Theorem \ref{thm_ql} and Theorem \ref{thm_wt} using Monte Carlo (MC) simulations. First in Section \ref{sec_simApprox}, we present a stable scheme for numerically  approximating the solution to the hydrodynamic PDE. Then in Section \ref{sec_simValid}, we compare the results obtained from Monte Carlo simulation and numerical solutions to PDE for a variety of different service distributions and in each case, we observe a close match.

\subsection{A Numerical Approximation Scheme}\label{sec_simApprox}

Note that the hydrodynamic PDE is comprised of a countable number of integro-differential equations, corresponding to each $Z_\ell$. Therefore, in order to numerically solve the hydrodynamic PDE \eqref{pde_eq1}-\eqref{pde_boundaryell}, we truncate the state space and only solve for $Z_\ell (t,r)$, $\ell  = 1, \ldots, L_0, 0 \leq t \leq T_0, 0 \leq r \leq R_0,$ for suitable $L_0  \in \mathbb{N}$ and  $R_0, T_0 < \infty$. Next, we discretize $r$ and $t$ on uniform meshes: $\hat Z_{\ell}(t_m,r_n)\doteq Z_{\ell}(m\delta, n\delta)$ with $0\le m\le \lfloor T_0/\delta\rfloor$ and $0\le n\le \lfloor R_0/\delta\rfloor$. Then we solve the equations \eqref{pde_eq1}-\eqref{pde_boundaryell} numerically by the finite difference method and the explicit forward Euler scheme\cite{Gro07}.  That is, for fixed $t_m>0$ and $r_n>0$, we apply the following approximations
\begin{align*}
\begin{split}
\hat Z_{1}(t_m+\delta,r_n)& =\hat Z_{1}(t_m, r_n+\delta)  - \int_{t_m}^{t_m+\delta}\overline{G}(t_m+\delta+r_n-u)\partial_{r}\hat Z_{2}(u,0)du\\
&\quad+\int_{t}^{t+\delta}\lambda(u)\overline{G}(t_m+\delta+r_n-u)\left(1-\hat Z_1(u,0)^2\right)du\\
&\approx  \hat Z_{1}(t_m,r_n+\delta)-\overline{G}(r_n)\left(\hat Z_{2}(t_m,\delta)-\hat Z_2(t_m,0)\right) +\overline{G}(r_n)\left(1-\hat Z_{1}(t_m,0)^2\right)\int_{t}^{t+\delta} \lambda(u)du.
\end{split}
\end{align*}
and for $\ell\ge 2$,
\begin{align*}
\hat Z_{\ell}(t_m+\delta,r_n)& = \hat Z_{\ell}(t_m, r_n+\delta) -\int_{t_m}^{t_m+\delta}\overline{G}(t_m+\delta+r_n-u)\partial_{r}\hat Z_{\ell+1}(u,0)du\\
&\quad+\int_{t_m}^{t_m+\delta}\lambda(u)\left(\hat Z_{\ell-1}(u,0)+\hat Z_{\ell}(u,0)\right)\left(\hat Z_{\ell-1}(u,t_m+\delta+r_n-u)-\hat Z_{\ell}(u,t_m+\delta+r_n-u)\right)du\\
&\approx\hat Z_{\ell}(t_m,r_n+\delta)-\overline{G}(r_n)\left(\hat Z_{\ell+1}(t_m,\delta)-\hat Z_{\ell+1}(t_m,0)\right)\\
&\quad +\left(\hat Z_{\ell-1}(t_m,0)+\hat Z_{\ell}(t_m,0)\right)\left(\hat Z_{\ell-1}(t_m,r_n)-\hat Z_{\ell}(t_m, r_n)\right)\int_{t_m}^{t_m+\delta}\lambda(u)du.
\end{align*}
The numerical scheme above has been stabilized by applying the upwind scheme \cite{CouEtAl52} to the derivatives with respect to $r$, otherwise, the numerics will blow up in a short time interval. Using these approximations, we can update $\hat Z_{\ell}\left( t_m+\delta, \cdot\right)$ from $\hat Z_{\ell}(t_m,\cdot)$.

\subsection{Validation of Results}\label{sec_simValid}

We now compare the empirical queue length distribution and mean virtual waiting time obtained from the Monte Carlo  simulation of an $N$-server network, with the corresponding limit quantities \eqref{Qconv} and \eqref{Eptconv}, as predicted by the numerical approximation of the PDE.

We consider a sequence of networks indexed by the number of servers $N$, with a Poisson arrival process with rate $\lambda=0.5$, and the following initial conditions. Each server has initially one job with initial age equal to zero, that is  $\xni(0)=1$ and $\ageni(0)=0$ for all $i=1,...,N.$ Note that this sequence of initial conditions satisfies the conditions of Theorem \ref{thm_conv}, and converges to the initial condition $(Z_\ell^0(\cdot);\ell\geq1)$ for the hydrodynamic PDE, where for $r\geq0$,
\[Z^0_1(r)=\overline G(r),\quad\quad\quad Z^0_\ell(r)=0,\quad \ell\geq2.\]

\subsubsection{Queue Length Distribution}
\section{Simulation Results}\label{sec_simulation}
\begin{figure*}
\begin{center}
\subfigure{\includegraphics[height = 2.8 cm ]{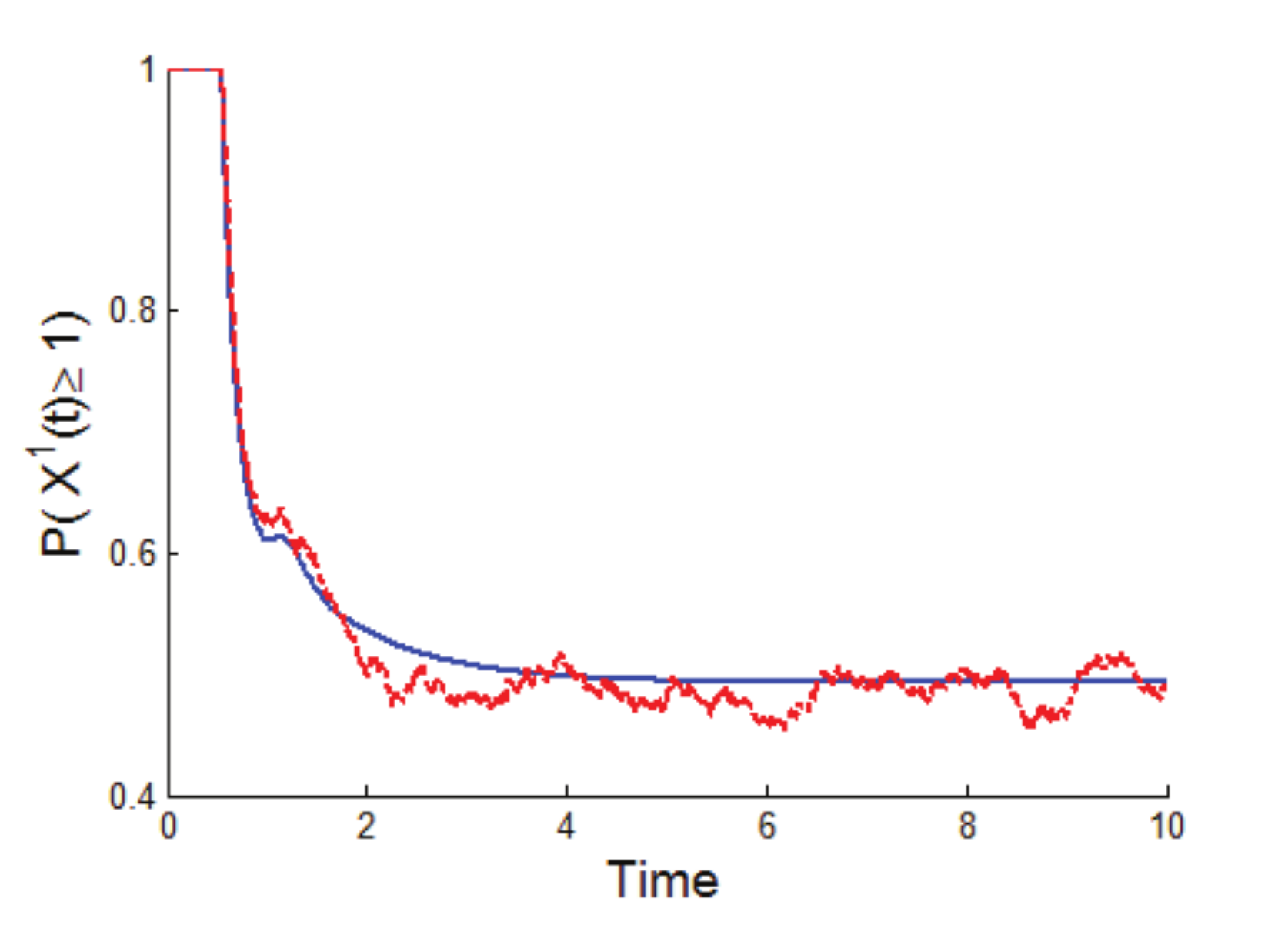}}\quad
\subfigure{\includegraphics [height =2.8 cm]{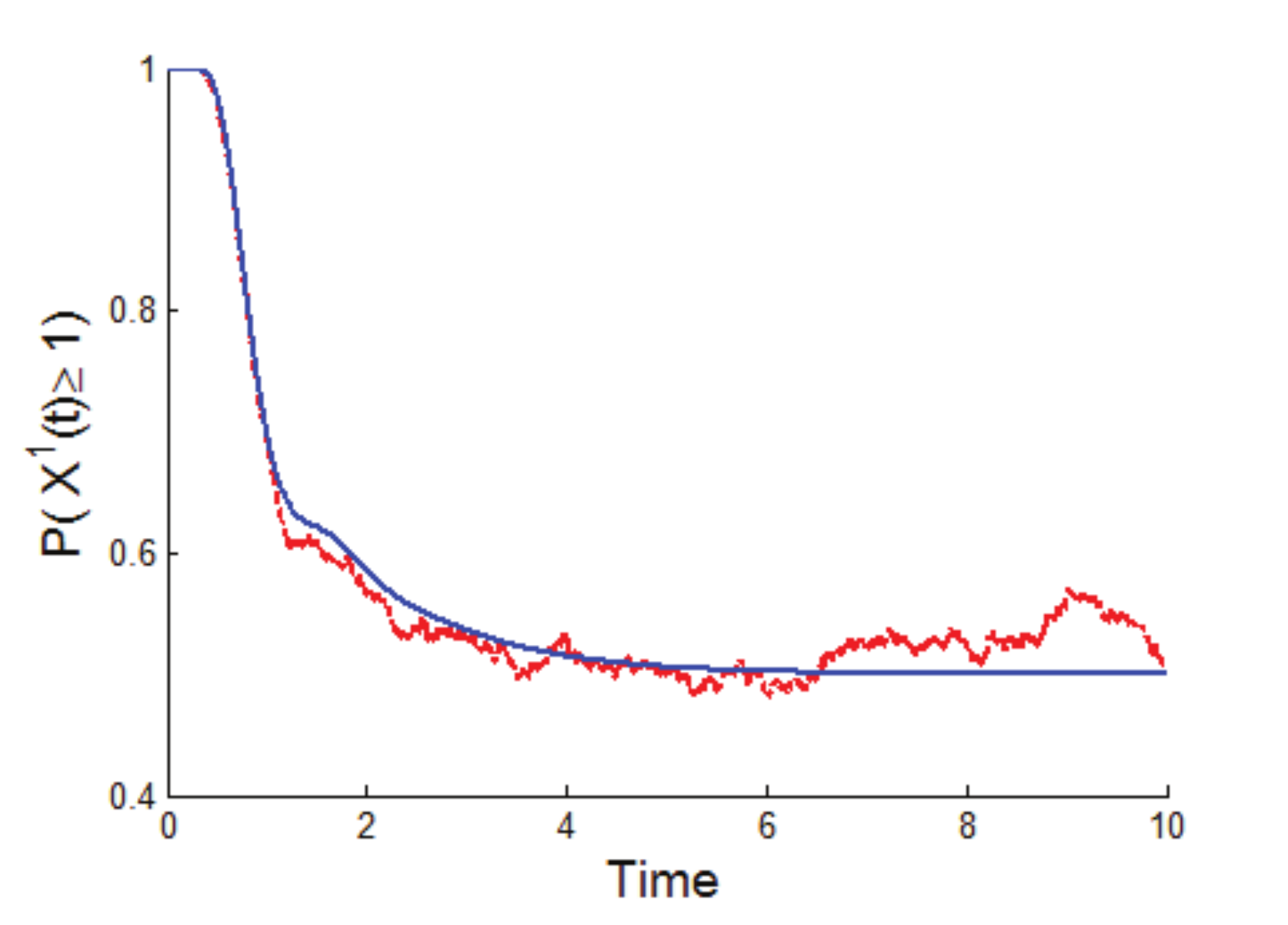}}\quad
\subfigure{\includegraphics[height = 2.8 cm ]{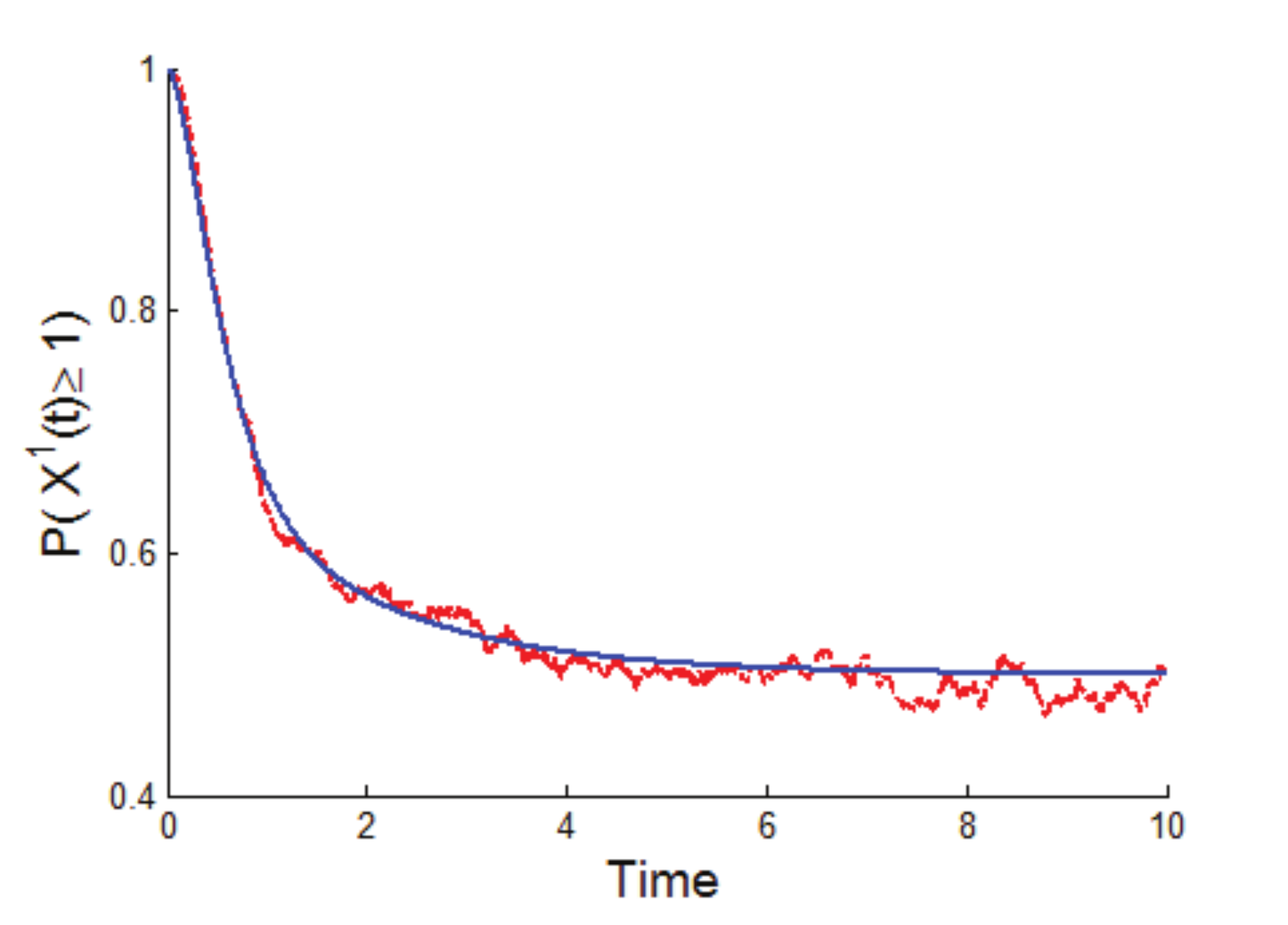}}\quad
\subfigure{\includegraphics[height = 2.8 cm ]{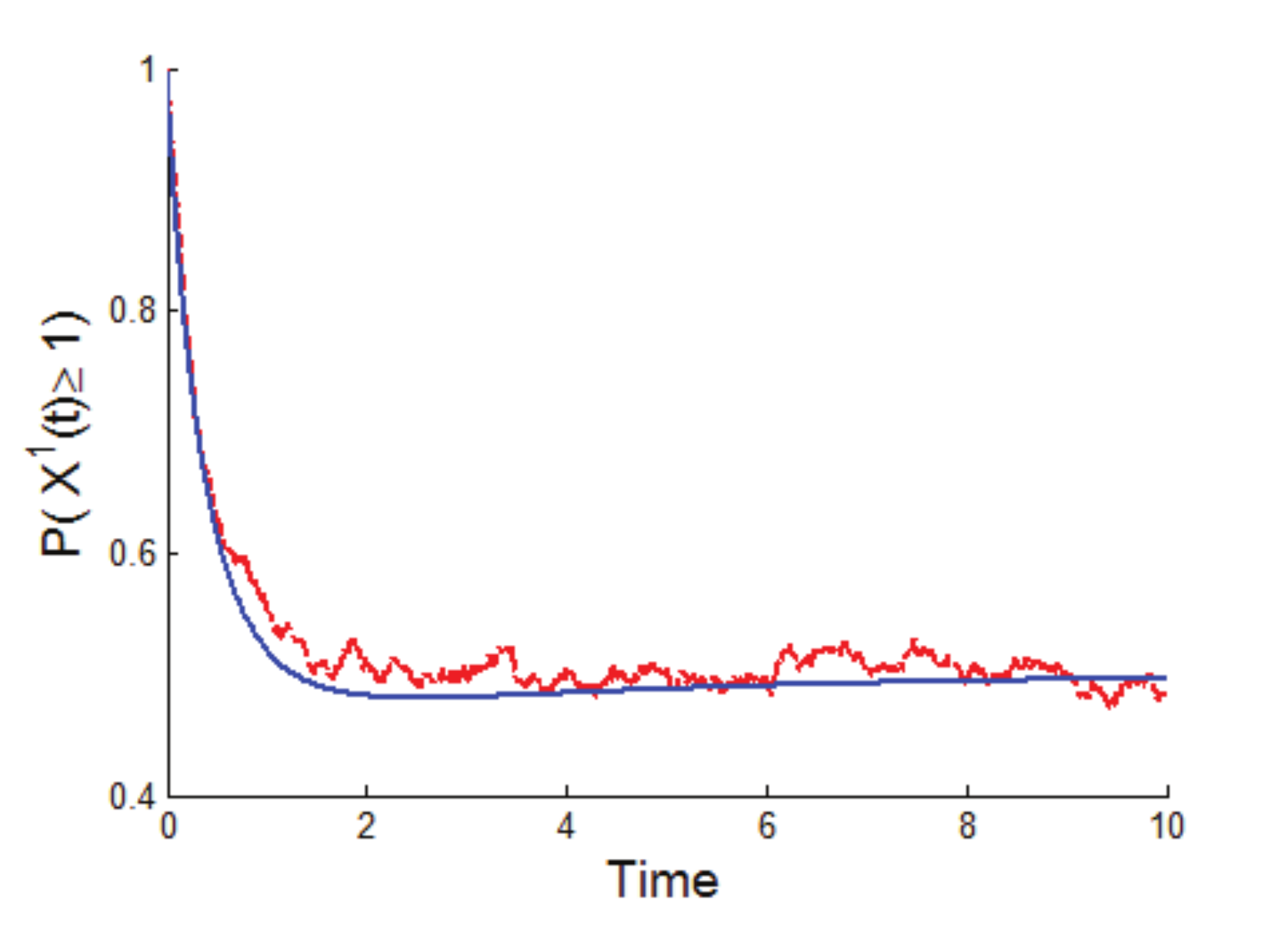}}
\end{center}
\begin{center}
\addtocounter{subfigure}{-4}
\subfigure[Pareto]{\includegraphics[height = 2.8 cm ]{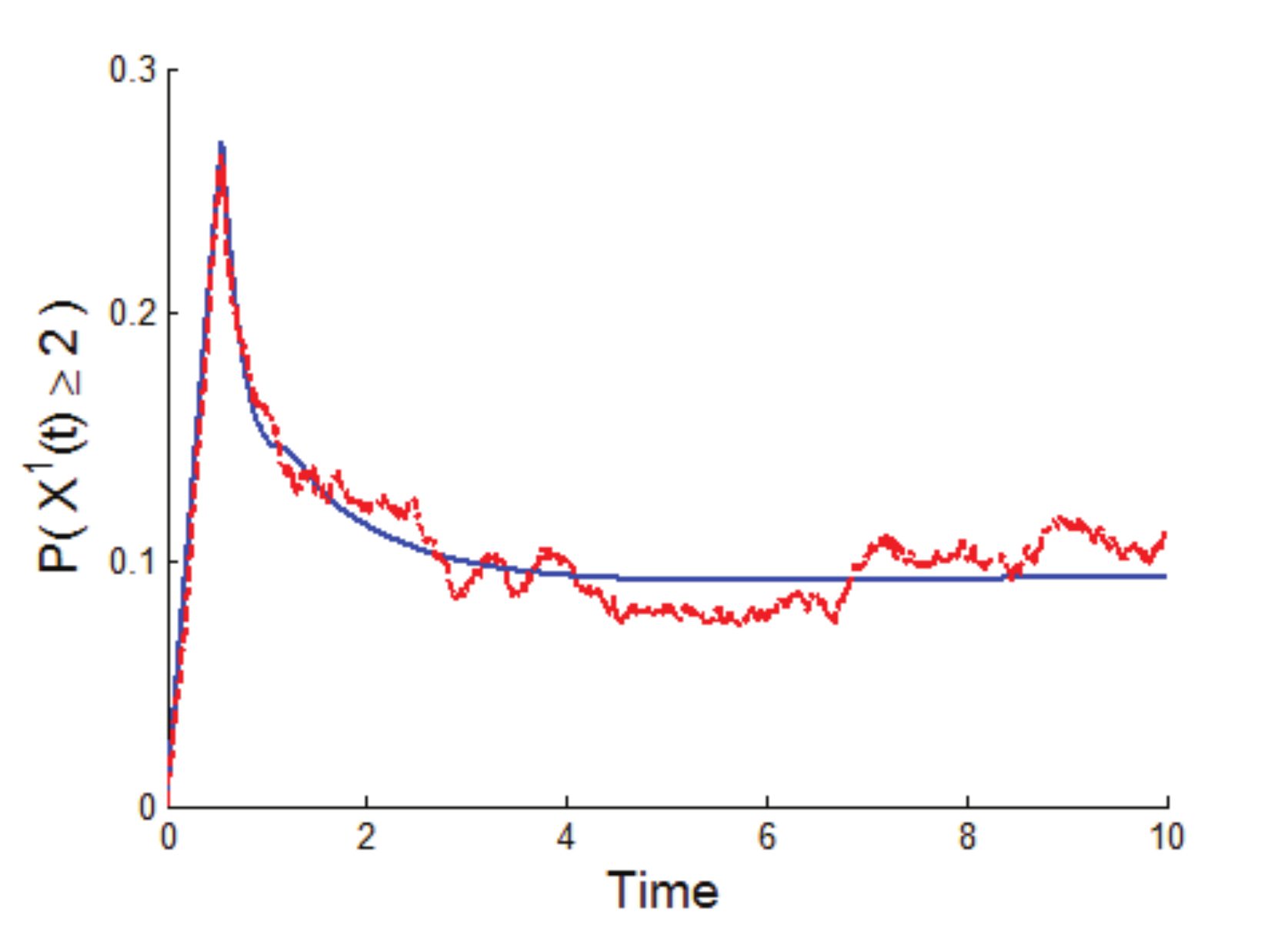}}\quad
\subfigure[Log-Normal]{\includegraphics [height =2.8 cm]{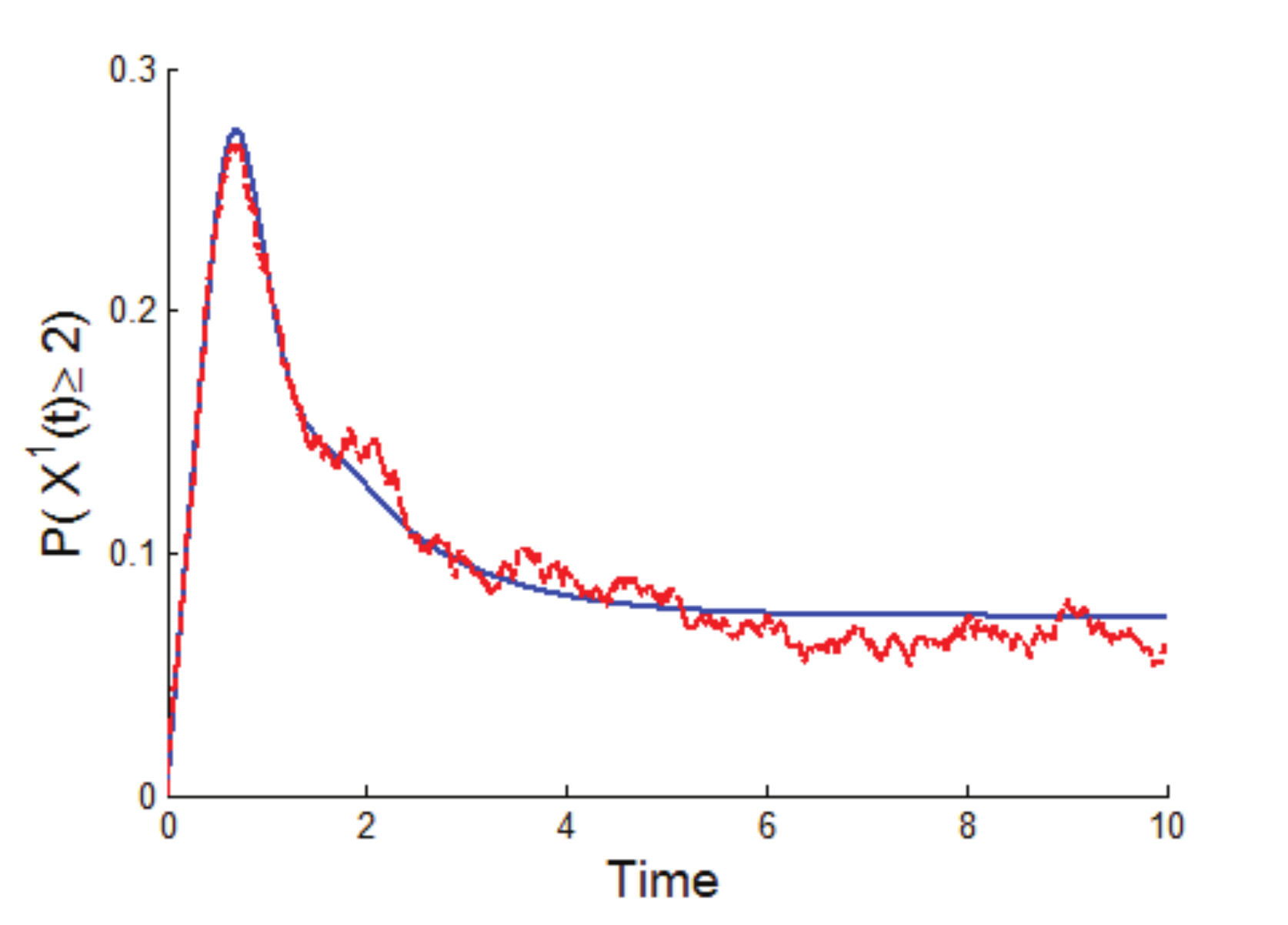}}\quad
\subfigure[Gamma]{\includegraphics[height = 2.8 cm ]{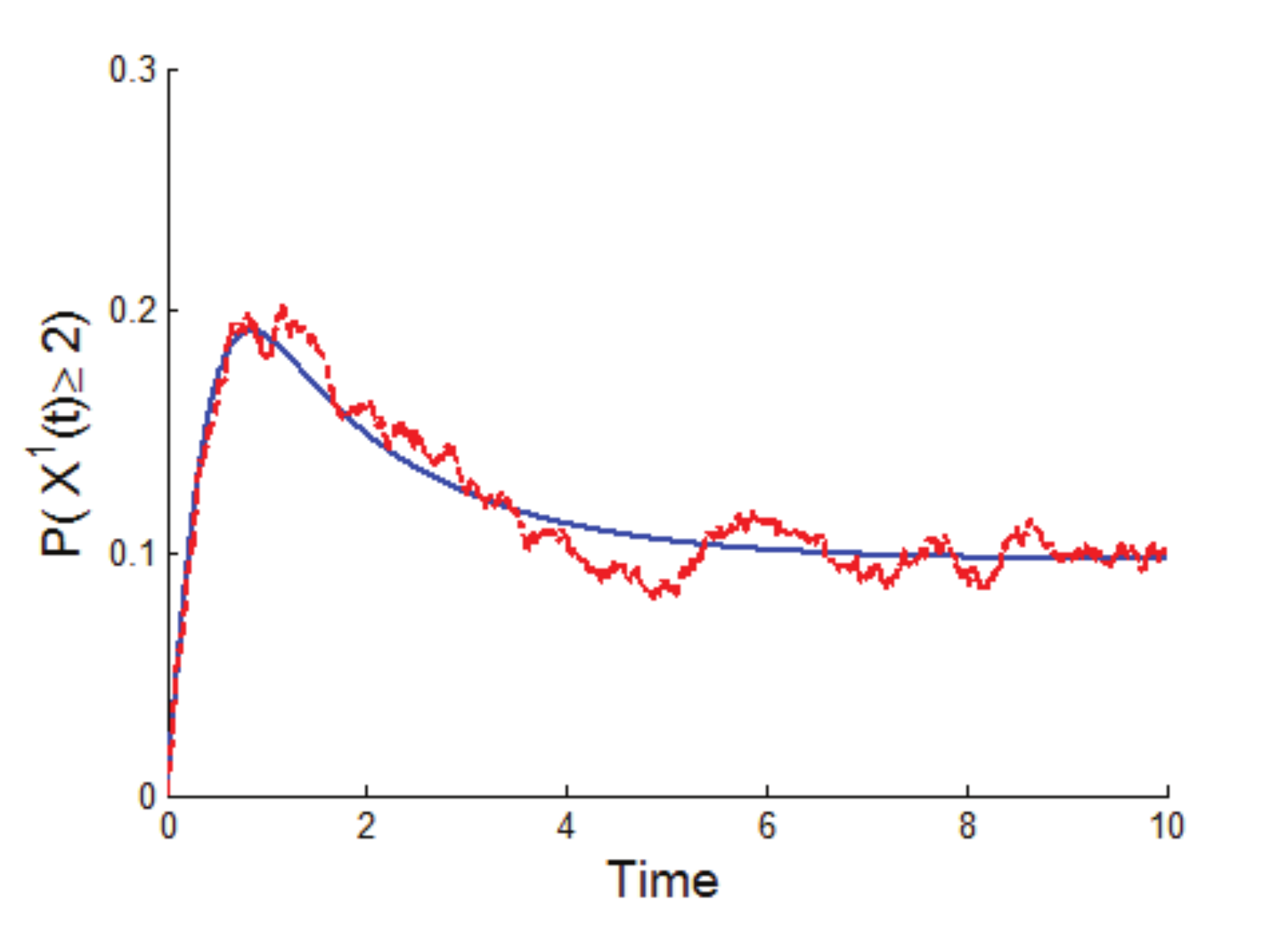}}\quad
\subfigure[Hyper-Exponential]{\includegraphics[height = 2.8 cm ]{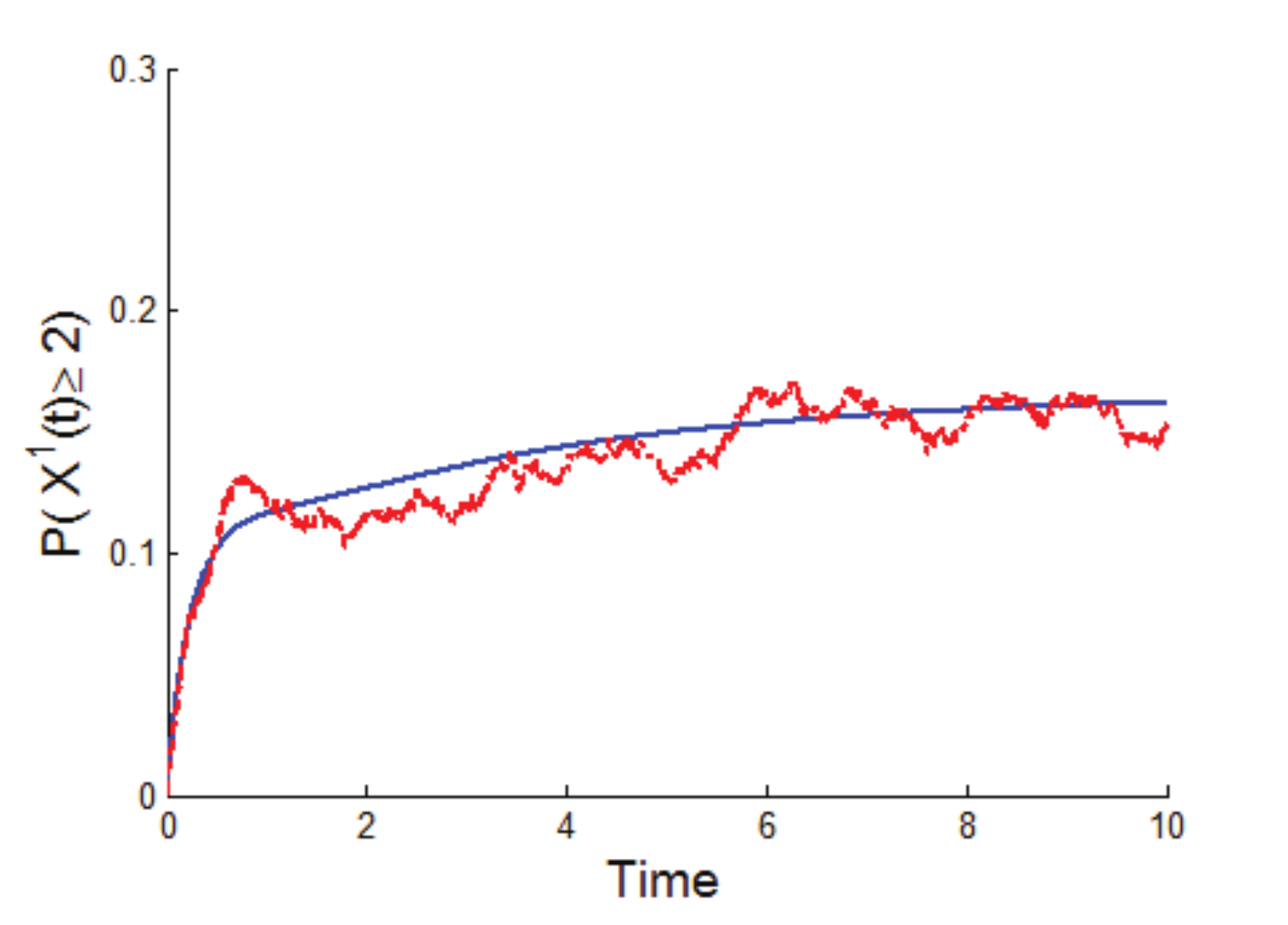}}
\end{center}
\caption{ Comparison of the estimate for $P(X^{(N),1}(t)\geq \ell)$ obtained from MC simulation (in red) versus the numerical approximation of $Z_\ell (t,0)$ (in blue) during $t \in [0,10]$ for $\ell=1$ (top row) and $\ell=2$ (bottom row).}
\label{Figure:s2}
\end{figure*}

First we compute the probability that a typical (fixed) queue has length at least $\ell$ at time $t$, for $t \in [0,10]$ in a network of $N=1000$ servers, using Monte Carlo simulation with $1000$ realizations. Then we compare this probability
with the quantity obtained by numerically solving the PDE using the method described in Section \ref{sec_simApprox}, with $L_0=6$, $R_0=20$ and $\delta=0.001$. We make this comparison for a variety of (unit mean) service time distribution, including Pareto (with shape parameter $\beta=2.25$),  Log-Normal (with shape parameter $\sigma=0.33$), Gamma (with shape parameter $k=2$) and Hyper-Exponential (with parameters $\lambda_1=0.5$ and $\lambda_2=2$), which is a special case of a Phase-type distribution.

The results are illustrated in Figure  ~\ref{Figure:s2} for $\ell=1$ and $\ell=2$, and a close match is observed in all cases. In our setting, the run time for the Monte Carlo method is approximately $2-3$ hours, which is orders of magnitude longer than the time taken to numerically approximate the PDE, which is around $7-9$ seconds.

\subsubsection{Waiting Time}
Next, we compare the mean virtual waiting time in the same setting described above, but with the arrival rate now set to $\lambda=0.7$. We measure the mean virtual waiting from a Monte Carlo simulation as follows: at any time $t$, we determine which server a virtual arriving job would have been routed to by choosing two servers uniformly at random, and picking the one with shorter queue length, and then observe the waiting time in that server. The average is taken over $2000$ realizations. We compare this to the following approximation of the limit provided by Theorem \ref{thm_wt}:
\begin{align}\label{virtual_wait}
\Ept{\waitn(t)}&\approx \sum_{\ell=2}^{L_0}\hat Z_{\ell}(t,0)^2+\sum_{\ell= 1}^{L_0-1} \left[ \hat Z_{\ell}(t,0)+\hat Z_{\ell+1}(t,0)\right]\sum_{j=0}^{\lfloor R_0/\delta\rfloor}\left[\hat Z_{\ell}(t,r_j)-\hat Z_{\ell+1}(t,r_j)\right]\delta.
\end{align}
where $\{\hat Z_\ell;\ell\geq1\}$ is the numerical solution of the PDE described in Section \ref{sec_simApprox}.

The result of the comparison for the Pareto distribution (with mean set to $1$ and shape parameter $\beta=3$) and  is plotted in Figure~\ref{Figure:waiting}. We observe good agreement between the two curves. Furthermore, recall that the actual waiting time of a job is the difference between the arrival and service entry times of that job. We also plot the average of the average of the actual waiting time.  At each time $t$, the latter quantity is defined to be the sum of the waiting times of all jobs arrived in that time slot in the mesh, divided by the number of jobs that arrived in that time slot.  We observe that the mean virtual waiting time is a good approximation to the average actual waiting time as well.

\begin{figure}
\begin{center}
\subfigure{\includegraphics[height = 5 cm ]{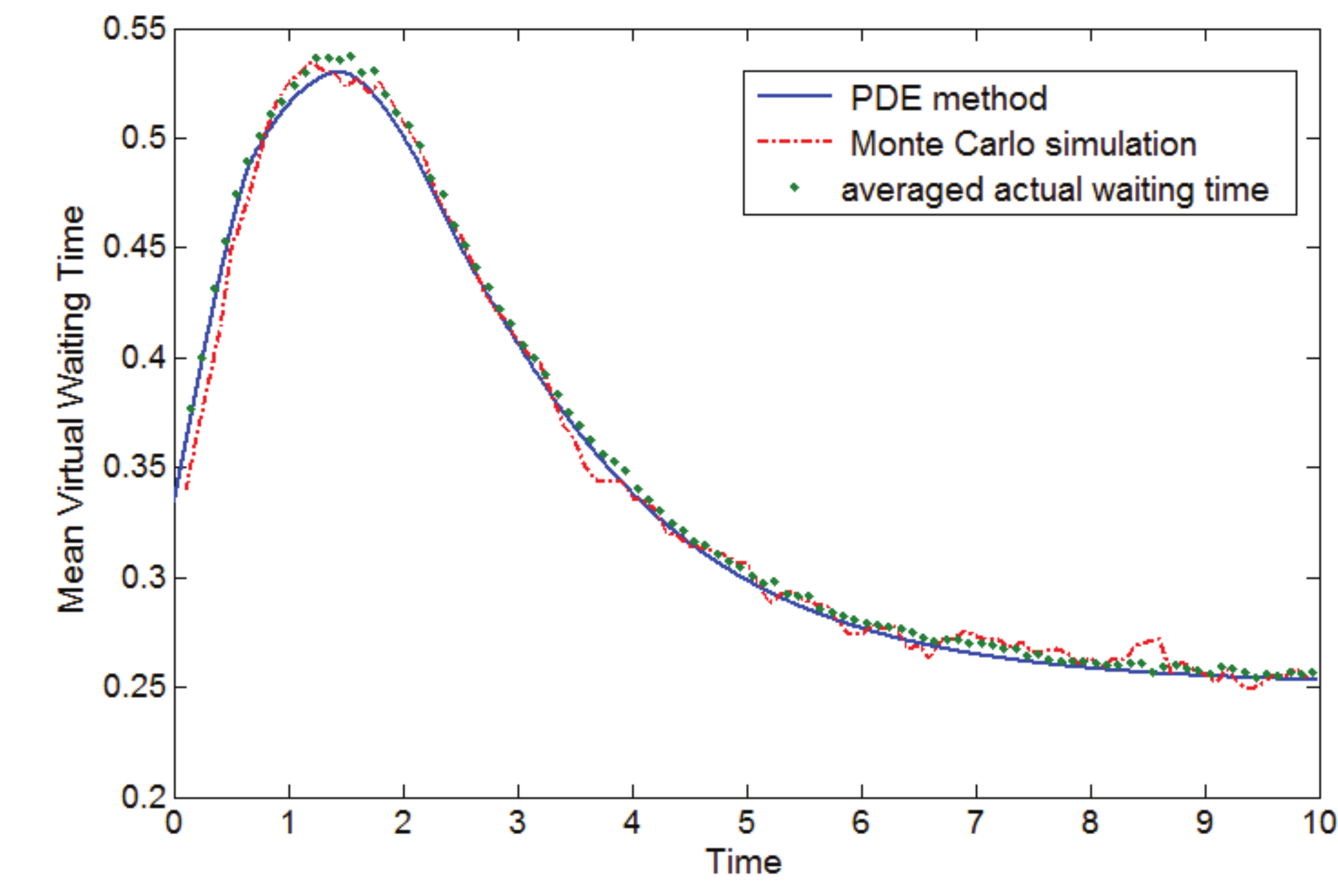}\label{Figure:waiting}}
\hspace{1.5cm}
\subfigure{\includegraphics[height = 5 cm ]{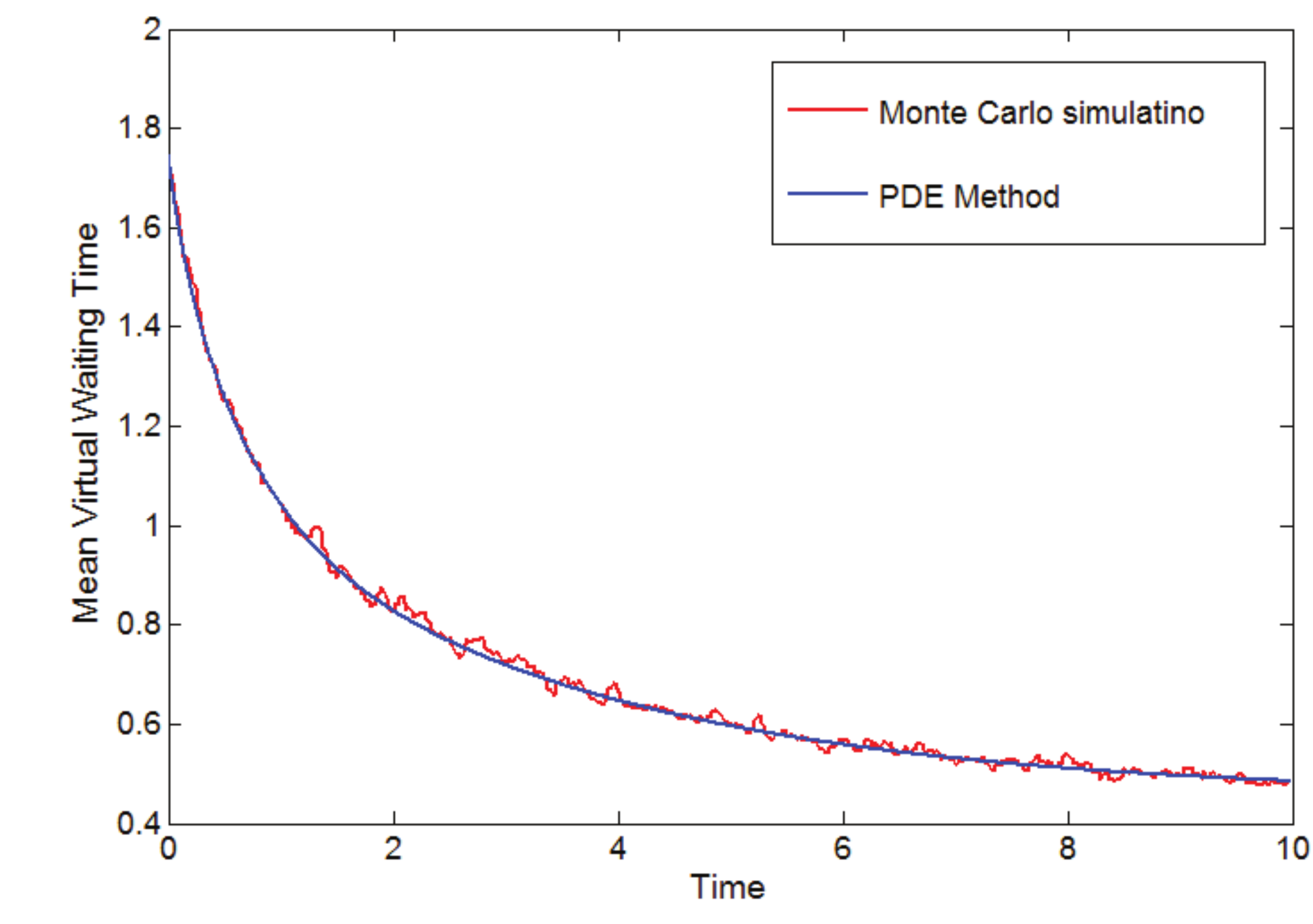}\label{Figure:waiting_gamma}}
\end{center}
\caption{\textbf{a.} Mean virtual sojourn times from MC (red) and  PDE (blue) as well as the averaged actual sojourn times from MC (green) for the Pareto distribution with $\beta =3$. \textbf{b.} Mean virtual waiting times from MC (red) and  PDE (blue) for Gamma service distribution and unbounded initial ages.}
\end{figure}

\subsubsection{More General Initial Conditions}\label{sec_moregeneral}
Finally, to illustrate that the assumption $\ageni(0)\le T_0$ on initial ages is not necessary, we validate our result for another sequence of networks with the following initial condition: there are initially two jobs in each queue, and the initial ages are all independent and distributed according to $\overline G(x)dx$ (which is the stationary age distribution in a renewal process with inter-arrival distribution $G$). The corresponding initial condition for the PDE are given by
\[Z_\ell^0(r)=\int_r^\infty\overline G(x)dx,\quad\ell=1,2,\quad\quad\quad Z_\ell^0(r)=0,\quad\ell\geq3.\]

We validate the approximation given in \eqref{virtual_wait} using Monte Carlo simulations for a network with $N=1000$ servers  and a Gamma service time distribution (with shape parameter $k=2$). The result is depicted in Figure ~\ref{Figure:waiting_gamma}.

\section{Engineering Insights}\label{sec_insight}
In practice, transient Quality of Service (QoS) parameters are of particular interest in many applications. However, to the best of our knowledge, prior to this work, load-balancing networks with general service distributions have only been studied in steady state and under constant Poisson arrivals \cite{BraLuPra13}. We illustrate how our PDE approximation can be used to shed insight into transient phenomena of practical relevance.  In Section \ref{sec_backlog}, we study the time it takes for a congested network to get rid of a backlog of jobs, and in Section \ref{sec_proiodic} we analyze the performance of a load balancing network with time-varying, periodic, Poisson arrivals, consisting of intermittent ``high'' and ``low'' periods.

\subsection{Initial Backlog}\label{sec_backlog}

Given a network that is congested with a backlog of jobs, the system administrator would like to know how long it would take for the system to get rid of the backlog and for the QoS to return closer to the normal operating point. In this section, we consider a network with a large initial backlog, and hence a large initial virtual waiting time, and study the time it takes for the system to unload to the extent that the mean virtual waiting time reaches half of its initial value, which we refer to as ``relaxation time''.  We investigate the effect of service distribution statistics on the relaxation time.  In each case, the goal is to illustrate how the PDE approximation may be used to help uncover interesting (and possibly unexpected) network phenomena.

In the presence of general service distributions, in order to capture congestion in the initial distribution of the network, one has to specify not only the distribution of the number of jobs in the different queues, but also the distribution of ages of jobs being served at these queues.   There are a number of different configurations that could reflect a congested initial state.  In order to generate initial conditions that may naturally occur in practice, we consider an initial state that corresponds to the distribution resulting from a network that has been experiencing a higher than normal arrival rate for a period of time. Specifically, we first consider a network that starts empty and runs with nominal arrival rate $\lambda = 0.6$ for $T_0=10$ units of time, and then experienced an  arrival rate that is about $8$ times the nominal arrival rate, namely $\lambda = 5$, for a period of $T_{b}=2$ units prior to zero.  The distribution of this network at time $T_0+T_{b}$ then represents a congested state for the network and is used as the initial condition for our PDE approximations.  We then assume that at time $0$, the mean arrival rate reverts back to its nominal value $\lambda = 0.6$  and we study the relaxation time, namely the time it takes for the mean virtual waiting time to reach half its initial value.

In Figure ~\ref{pareto}, we plot the evolution in time of the mean virtual waiting time for (nominal) arrival rate $\lambda = 0.6$, both when the service distribution is Pareto with unit mean and parameter $\beta = 1.25$ (heavy-tailed) and Pareto with parameter $\beta = 2.50$ (light-tailed). The curves are obtained using the numerical scheme to solve the PDE with cutoff values $L_0 = 12$ and $R_0=20$ and step size $\delta=0.001$.  An interesting observation is that the heavy-tailed case clearly has a smaller relaxation time than the light-tailed case. This phenomenon may seem particularly surprising in light of the steady-state result of Bramson et. al in \cite{BraLuPra13}, which shows that for Pareto service time distributions with unit mean, the tail of the limit steady state queue-length distribution has a double exponential decay when $\beta > 2$ (light-tailed), in contrast to the case when $\beta < 2$ (heavy-tailed), when it has only a power law decay.   Although the tails of the limit  steady-state distribution do not represent the limit of the tails of the steady-state distribution in the $N$-server system (since the $N\to\infty$ and tail decay limits are typically not interchangeable), the results in \cite{BraLuPra13} suggest that from the point of view of equilibrium queue length, the light-tailed Pareto distribution shows better performance.

The transient phenomenon observed above also persists when the initial condition is instead chosen to be a large number of jobs uniformly distributed amongst different queues, all starting with zero ages, which may roughly correspond to congestion caused due to a sudden large spurt of job arrivals into the network.

\begin{figure}
\centering
\subfigure{\includegraphics[height=5 cm]{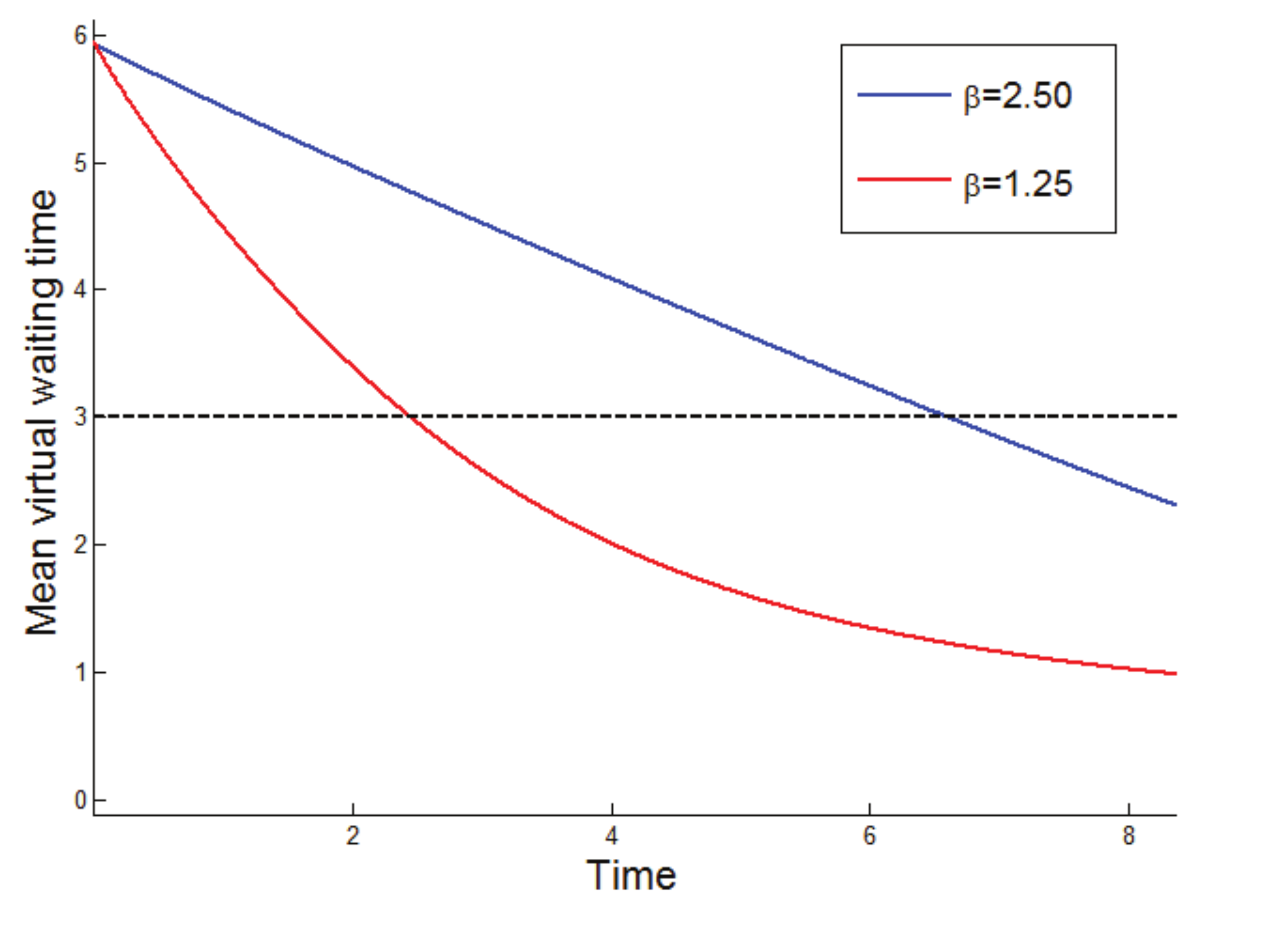}\label{pareto}}
\hspace{1.5cm}
\subfigure{\includegraphics[height=5 cm]{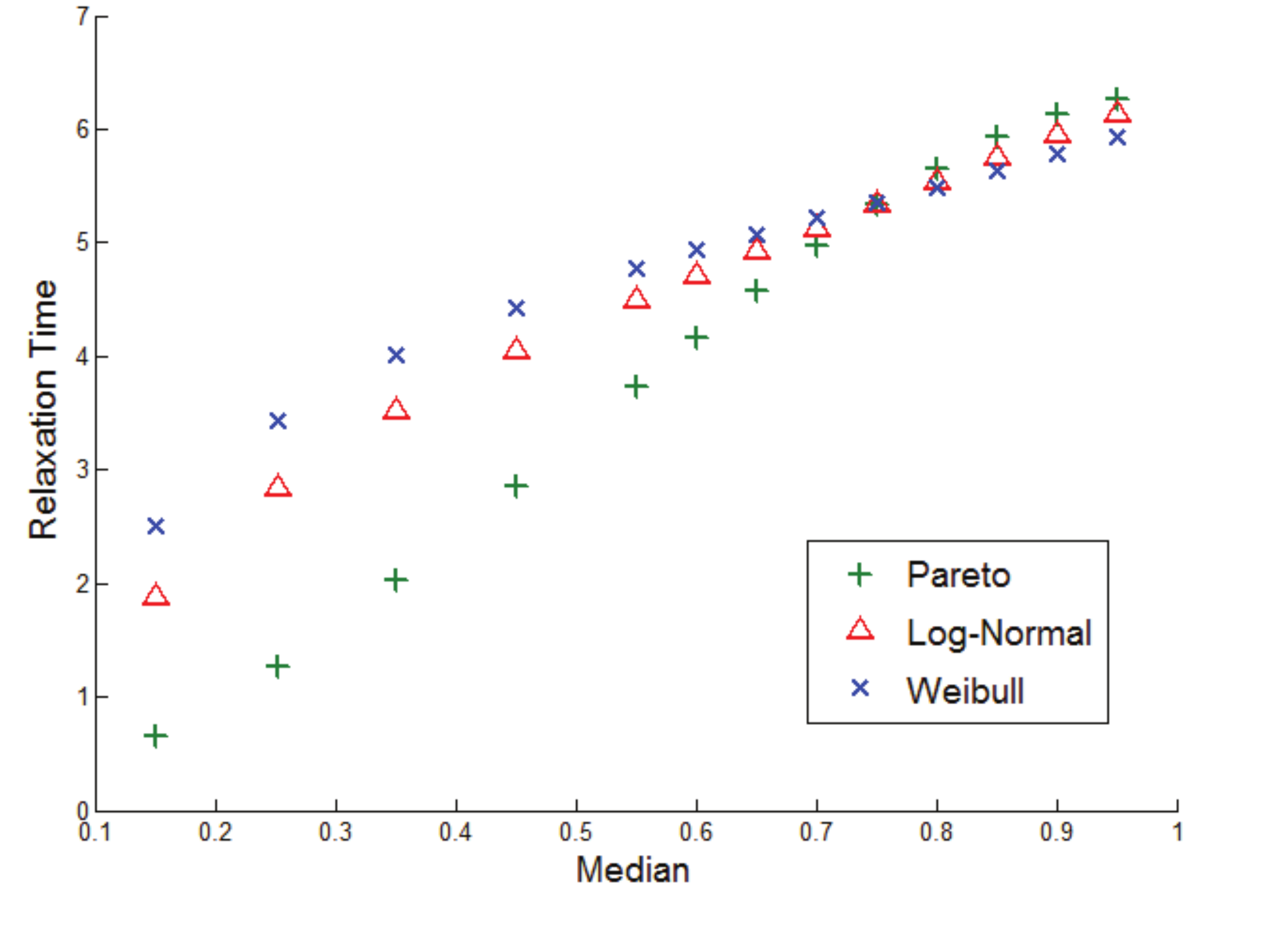}\label{trend}}
\caption{\textbf{a.}  Mean virtual waiting time for network with initial backlog and Pareto service distribution with  $\beta=1.25$ (red) and $\beta=2.50$ (blue), using the PDE method. \textbf{b.} Relaxation time vs. Median for Pareto, Log-Normal and Weibull service time distributions.}
\end{figure}

A possible heuristic explanation for the contrasting behavior observed in the transient case is that when a Pareto service distribution is fixed to have mean equal to one, the median of the distribution decreases with a decrease in $\beta$. As a result, when the heavier the tail of the service distribution, the greater the fraction of initially backlogged (and newly arriving) jobs with smaller service requirements, and the smaller the fraction of jobs with very long service times.  This helps servers in a large number of queues to get rid of their backlog and reduce their queue lengths faster, whereas the jobs with long service times lead to large queue lengths in only a small fraction of servers.  The latter jobs do not increase the mean virtual waiting time significantly because a new potential arrival will avoid the few long queues with high probability under the $SQ(2)$ load-balancing policy. This argument suggests that the same trend should hold for different classes of service time distributions.  In Figure ~\ref{trend}, we plot the relaxation times obtained via the PDE approximation for the Pareto, Weibull and Log-Normal service time distributions for different median values, and observe that the relaxation time decreases as the median decreases (and variance increases) for all these distribution families.

\subsection{Periodic Arrivals and Effective Arrival Rate}\label{sec_proiodic}

In contrast to prior work, our method also allows us to study the $SQ(d)$ network under time-varying arrivals. In many real-world applications, the arrival process is periodic (over period of a day, for example,) comprised of peak and off-peak periods. In this scenario, we examine the effect of the time- inhomogeneity of the arrival on the performance of the network.

As an illustration, we consider a family of periodic arrival rate functions $\lambda (\cdot) \in L^1_{\text{loc}}\ho$, with period $T$, parameterized by the average arrival rate $\overline{\lambda}$ over the period and a constant $\Delta > 0$, which can be viewed as a burstiness parameter.  The arrival rate takes the value $\overline{\lambda} + \Delta$ in the first-half of the period and the value $\overline{\lambda} - \Delta$ in the second half of the period, corresponding to peak and off-peak periods.   The larger the $\Delta$, the more dramatic the difference between the peak and off-peak arrival rates; the particular case $\Delta = 0$ corresponds to the case of constant arrival rate.  Using our PDE approximation, for  fixed $\overline{\lambda}$, we study the effect of the burstiness parameter $\Delta$ on the mean virtual waiting time of the network.

As one would expect, the burstiness has a negative impact on the network performance as it results in an increase in the mean virtual waiting time. However, the PDE approximation also quantifies this effect, and allows comparisons across different service time distributions.  To this end, for a fixed average arrival rate and different values of the burstiness parameter $\Delta$, we compute the mean virtual waiting time averaged over one period, denoted by $\overline W(\overline \lambda,\Delta)$, and find the \textit{constant rate} Poisson arrival that results in the same averaged virtual waiting time. We denote this constant rate by $\lambda_{\text{eff}}=\lambda_{\text{eff}}(\overline\lambda,\Delta)$, and refer to it as the ``effective arrival rate'' corresponding to $\overline\lambda$ and $\Delta$, which hence has the property
 \[\overline W(\overline\lambda,\Delta)=\overline W(\lambda_{\text{eff}},0).\]

We compare the effect of traffic burstiness on the performance of the network under a heavy tail and a light tail Pareto service distribution. In Figure ~\ref{Figure:waiting_period}, we plot $\lambda_{\text{eff}}$ as a function of $\Delta$ for $\overline\lambda=0.7$ and Pareto service time distributions with shape parameters  $\beta=1.5$ (heavy tail, infinite variance) and $\beta=3.0$ (light tail, finite variance.) As illustrated in the figure, the burstiness of the arrival rate has a greater effect on the network when the service time has finite variance ($\beta = 3.0$) rather than infinite variance ($\beta = 1.5)$. In other words, as far as the average mean virtual waiting time is concerned, the network with the heavy-tailed service time shows less increase in $\lambda_\text{eff}$ than the light-tailed service time. This is in line with our observation of backlogged networks that heavy tails seem to have a less deleterious effect on the transient behavior of the network than in equilibrium.

It is worth mentioning that quantitative insights into the network such as those illustrated in Figure ~\ref{Figure:waiting_period}, may require the solution of inverse problems, such as the computation of  $\lambda_\text{eff}$ for various values of $\Delta$, which would be incredibly time-consuming, if not infeasible using Monte Carlo simulation.

\begin{figure}
\centering
\includegraphics[height= 5.3 cm ]{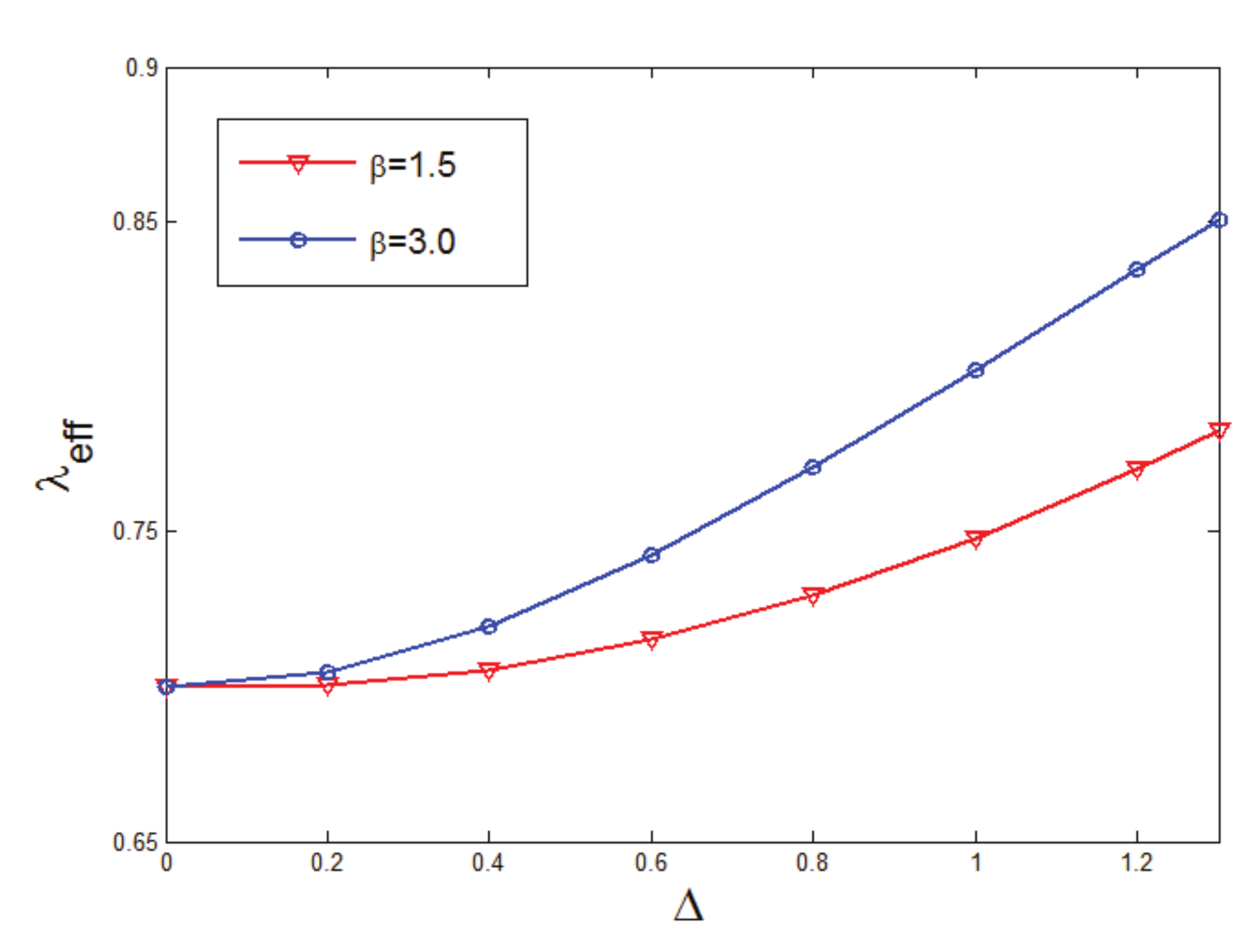}
\caption{Effective arrival rate $\boldsymbol{\lambda_{\text{eff}}}$ vs. burstiness $\Delta$ for Pareto service distributions with $\beta=1.5$ and $\beta=3$. }
\label{Figure:waiting_period}
\end{figure}

\section{Discussion and Future Work}\label{sec_discussion}
In this paper, we have introduced a new framework to analyze load-balancing networks that use the $SQ(d)$ algorithm, focusing for simplicity on the $SQ(2)$ algorithm.  We introduced the hydrodynamic PDE, which captures the evolution of the scaled state of the network in the limit as the number of servers $N$ tends to infinity.  We proved that the PDE has a unique solution and proposed a numerical scheme for efficiently solving the PDE, thus providing a more computationally effective method than Monte Carlo simulations for analyzing properties of large networks.  As an illustration, we applied our approximation to study transient performance measures such as the relaxation time in a backlogged network and the effect of traffic burstiness on a network with periodic time-varying arrivals.

There are many avenues for future research to extend this work. Firstly, it would be worthwhile to continue to study the effect of heavy tails on transient measures of network performance under different scenarios and to also investigate and rigorously establish convergence of the numerical scheme, as the truncation parameters $L_0$, $R_0$ go to infinity and the mesh size $\delta$  goes to zero, and obtain convergence rates.  It would also be of interest to directly study the PDE to analytically establish properties of the limit dynamics.  Secondly, as in the ODE method for exponential service time distributions, we would also like to study the equilibrium properties of the PDE.  In future work, we hope to show that the PDE has a unique fixed point and characterize its dependence on network parameters.

More broadly, our state representation and overall framework can be applied, with suitable small modifications, to analyze other load-balancing algorithms in the presence of general service distributions, both in the transient and steady state regimes. We hope to use the insight gained from such analyses to also design new algorithms (in both the homogeneous setting considered here, as well as heterogeneous settings) that may lead to better performance.

\bibliographystyle{abbrv}
\bibliography{lbref}

\end{document}